\newtheorem{theorem}{Theorem}[section]
\newtheorem{prop}[theorem]{Proposition}
\newtheorem{lemma}[theorem]{Lemma}
\newtheorem{cor}[theorem]{Corollary}
\theoremstyle{definition}
\newtheorem{defn}[theorem]{Definition}
\newtheorem{remark}[theorem]{Remark}
\def\Bbb{\mathbb} \def\cal{\mathcal}
\def\wt#1{\widetilde{#1}}
\def\wh#1{\widehat{#1}}
\def\wl#1{\overline{#1}}
\def\sier#1{{\cal O}_{#1}}
\def\C{{\Bbb C}} 
\def\Z{{\Bbb Z}} \def\P{{\Bbb P}} \def\R{{\Bbb R}}
\def\cX{{\cal X}}   \def\cY{{\cal Y}}
\def\al{\alpha} \def\be{\beta} \def\ga{\gamma} \def\la{\lambda} 
\def\ep{\varepsilon} \def\de{\delta}
\DeclareMathOperator{\ord}{ord}  
\let\lg\undefined
\DeclareMathOperator{\lg}{lg}  
\DeclareMathOperator{\Sing}{Sing} 
\DeclareMathOperator{\Pic}{Pic}  
\DeclareMathOperator{\Cl}{Cl}
\begin{document}
\title{Non-embeddable $1$-convex manifolds}
\author{Jan Stevens}
\address{Matematiska vetenskaper, G\"oteborgs universitet 
och Chalmers tekniska h\"og\-skola, 41296
G\"oteborg, Sweden}
\email{stevens@chalmers.se}

\keywords{ 1-convex manifolds, small resolutions}

\subjclass{32S45, 32F10, 32Q15, 32T15, 13C20, 14E30}
\begin{abstract}
We show that every small resolution of a three-dimensional
terminal hypersurface singularity can occur on a 
non-embeddable $1$-convex manifold.

We give an explicit example of a non-embeddable manifold
containing an irreducible exceptional rational curve 
with normal bundle of type $(1,-3)$. To this end we
study small resolutions of $cD_4$-singularities.
\end{abstract}

\maketitle
\section*{Introduction}

A $1$-convex (or strongly pseudoconvex) complex manifold $X$
with $1$-dimensional  exceptional set can be  embedded
in some $\C^M\times\P^N$, except possibly when
$\dim X =3$ and  an irreducible component of the exceptional curve
is a rational curve with 
normal bundle of type $(-1,-1)$, $(0,-2)$ or $(1,-3)$.
Non-embeddable examples are known in the first two cases \cite{vvt1,co,BL}.
In this paper we show that last type also occurs.

An irreducible exceptional rational curve $C$  on a 3-dimensional manifold $X$
with normal bundle of type $(a,b)$ with $a+b=-2$ 
blows down to a terminal Gorenstein singularity,
that is,  a \textit{cDV}-singularity. 
This means that the general hyperplane section
through the singular point is Du Val, or in other terminology, a
rational double point.
The simplest possibility is an ordinary double point 
(a 3-dimensional $A_1$-singularity). 
The first example of a non-embeddable $1$-convex 
manifold \cite{vvt1,co}
is a variant of Moishezon's example of a non-projective Moishezon
manifold \cite{mo}. Let $Y\subset  \C^4$ be a general hypersurface of degree 
$d\geq6$ with one $A_1$-singularity and 
let $\wl Y\subset \P^4$ be its projective
closure. A small resolution of $\wl Y$ is non-projective and a small
resolution of $Y$ is non-embeddable.
The explicit example of \cite{BL} for the case of normal bundle $(-1,-1)$
is also of this form. The examples for $(0,-2)$ are similar. They start from 
an equation $f_{2k}$ for a $3$-fold $A_{2k-1}$-singularity, for which a
small resolution with irreducible exceptional set is easily constructed. Let
$f_{2N}$ be a homogeneous polynomial of 
high degree $2N$ with isolated singularity
at $0\in\C^4$.  Then $Y_k=\{f_{2k}+\ep f_{2N}=0\}$ is an affine hypersurface
with non-embeddable small resolution. In \cite{BL} this is shown by 
explicit construction of a $3$-chain with the exceptional set as boundary. 

These examples suggest the following construction. Let $\{f=0\}\subset \C^4$
be a hypersurface with terminal singularity at the origin, admitting a small
resolution. Choose a general enough polynomial $g$ of high enough degree.
A small resolution $X$ of $Y=\{f+\ep g=0\}$ should be a non-embeddable $1$-convex
manifold. Our main result states that this is indeed the case.
The proof uses that $X$ is non-embeddable if and only if the corresponding
small resolution $\wl X$ of the projective closure
$\wl Y$ is non-projective \cite{V} 
(as $g$ is general, the 
hyperplane section $\wl Y_\infty=\{g=0\}\subset \P^3$ is smooth). This follows once
the group of Weil divisors modulo algebraic equivalence has rank one
\cite{ko}.
We show the stronger result that the class group $\Cl(\wl Y)$ is infinite
cyclic: by the Grothendieck-Lefschetz theorem of \cite{RS1} $\Cl(\wl Y)$
injects into  the class group of   $\wl Y_\infty$, and by the classical
Noether-Lefschetz theorem this smooth surface has Picard group $\Z$ for
very general $g$ (meaning for $g$ outside a countable union of subvarieties
in parameter space).

We also provide an explicit example of a non-embeddable $X$ with irreducible
exceptional curve with normal bundle $(1,-3)$. Such a curve blows down to 
a singularity with general hyperplane section of type $D_4$, $E_6$, $E_7$
or $E_8$ \cite{KM}. In the latter cases the formulas become very complicated,
so we restrict ourselves to the simplest one ($D_4$). The strict transform
of the general hyperplane section is  a partial resolution of the $D_4$
singularity, and the total space is a 1-parameter smoothing. It can be obtained
by pull-back from the versal deformation of the partial resolution, which is a
simultaneous partial resolution of the versal deformation of the singularity, 
after a base change. We compute this base change and then construct the
small modification, generalising Example (5.15) in Reid's pagoda paper \cite{re}.
We classify all $1$-parameter smoothings, that is, all $3$-dimensional
singularities to which these blow down.

Our explicit example is a small resolution of 
\[
 x^2+(t+z)y^2+(t-z)z^2-(t^2-z^2)t^{2k}+\ep t^{2m}=0\;.
\]
This hypersurface is very singular at infinity, but the equation has the advantage 
of containing only a few terms.
We explicitly show that
(twice) the  exceptional curve bounds
a real 3-chain, and therefore the small resolution is not embeddable.

In the first  section we recall the necessary definitions 
and known results about 
non-embeddable $1$-convex manifolds. Then we show our main 
result on the existence of hypersurfaces with non-embeddable
small resolution. In the second section we classify the
$cD_4$-singularities which admit a small resolution,
and construct this resolution explicitly. The final section
is devoted to the specific example.


\section{Non-embeddable 1-convex manifolds}
\begin{defn}
A complex space $X$ is \textit{$1$-convex} (or \textit{strongly pseudoconvex}) 
if there 
exists a proper surjective morphism $\pi\colon X \to Y$ onto a Stein 
space $Y$ with $\pi_*\sier X=\sier Y$ and a finite subset $T\subset Y$
such that $X\setminus \pi^{-1} (T) \to Y\setminus T$ is biholomorphic. The 
\textit{exceptional set} is $S=\pi^{-1} (T)$.
\end{defn}

\begin{defn}
A $1$-convex space $X$ is called \textit{embeddable} 
if there exists a holomorphic embedding
$X\to\C^M\times\P^N$ for some $(M,N)$.
\end{defn}

A necessary and sufficient condition is given by the following result
\cite{sch}:

\begin{prop}
The $1$-convex manifold $X$  with exceptional set $S$
is embeddable if and only if  there exists a line
bundle on $X$ with $L_{|S}$ ample.
\end{prop}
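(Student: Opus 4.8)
The plan is to prove the two implications separately; the substantial one is to manufacture an embedding out of the line bundle, by combining the proper map $X\to Y\hookrightarrow\C^M$ that comes from $Y$ being Stein with a map to projective space built from sections of a high power of $L$. For the easy direction, suppose $\iota\colon X\hookrightarrow\C^M\times\P^N$ is an embedding, let $p_1,p_2$ be the two projections of $\C^M\times\P^N$, and put $L=\iota^{*}p_2^{*}\sier{\P^N}(1)$. The set $S=\pi^{-1}(T)$ is compact because $\pi$ is proper and $T$ is finite, so by the maximum principle $p_1\circ\iota$ is constant on each connected component of $S$; hence each such component lies in a single fibre $\{c\}\times\P^N$, where it is a closed subspace of $\P^N$, and there $L$ restricts to $\sier{\P^N}(1)$. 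Thus $L_{|S}$ is ample.

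For the converse, assume $L_{|S}$ is ample. Since $Y$ is a finite-dimensional reduced Stein space it admits a closed embedding $h\colon Y\hookrightarrow\C^M$ (Narasimhan), and $\phi:=h\circ\pi\colon X\to\C^M$ is proper, restricts to a biholomorphism of $X\setminus S$ onto $h(Y)\setminus h(T)$, and maps $S$ into the finite set $h(T)$. Thus $\phi$ is already injective and an immersion on $X\setminus S$, and it separates $S$ from $X\setminus S$ as well as points of $S$ lying in distinct fibres of $\pi$. It therefore suffices to produce an integer $k$ and finitely many sections $s_0,\dots,s_N\in H^0(X,L^{\otimes k})$ with no common zero on $X$ which, along $S$, separate points lying in a common fibre of $\pi$ and separate tangent vectors of $X$: then $(\phi,[s_0:\dots:s_N])\colon X\to\C^M\times\P^N$ is a proper injective holomorphic immersion between manifolds, hence a closed embedding. (Properness holds since composing with the projection onto the compact factor $\P^N$ gives $\phi$, which is proper.)

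The crucial point is that $H^q(X,\cal I_S^2\otimes L^{\otimes k})=0$ for all $q\ge1$ once $k\gg0$. I would reduce this, through the Leray spectral sequence over the Stein base $Y$, to relative vanishing: since $Y$ is Stein and $\pi$ is an isomorphism over $Y\setminus T$, one has $H^q(X,\cal F)\cong\bigoplus_{t\in T}(R^q\pi_*\cal F)_t$ for every coherent $\cal F$ and $q\ge1$, so it is enough that $(R^q\pi_*(\cal I_S^2\otimes L^{\otimes k}))_t=0$ for $q\ge1$, $k\gg0$ — the analytic form of Serre's vanishing theorem for the proper map $\pi$, which applies because $L$ is ample on every fibre of $\pi$ (trivially over $Y\setminus T$, and over $T$ because $L_{|S}$ is ample), and the same input also yields surjectivity of $\pi^{*}\pi_{*}L^{\otimes k}\to L^{\otimes k}$ for $k\gg0$. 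Granting this, the case $q=1$ makes $H^0(X,L^{\otimes k})\to H^0(X^{(1)},L^{\otimes k})$ surjective, where $X^{(1)}=(S,\sier X/\cal I_S^2)$ is the first infinitesimal neighbourhood of $S$; the latter is a compact complex space carrying the ample bundle $(L^{\otimes k})_{|X^{(1)}}$, hence is projective, and by the Kodaira--Serre theorem $(L^{\otimes k})_{|X^{(1)}}$ is very ample for $k\gg0$. Since $X^{(1)}$ has the same Zariski tangent spaces as $X$ at the points of $S$, lifting sections from $X^{(1)}$ to $H^0(X,L^{\otimes k})$ produces global sections of $L^{\otimes k}$ that separate both points of $S$ and tangent vectors of $X$ along $S$. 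Finally $\pi_{*}L^{\otimes k}$ is a coherent sheaf on the Stein space $Y$, locally free of rank one away from the finite set $T$, so by a theorem of Forster--Ramspott it is generated by finitely many global sections on $Y$; composing $\sier X^{\oplus r}\to\pi^{*}\pi_{*}L^{\otimes k}\to L^{\otimes k}$ with the surjection above gives finitely many sections of $L^{\otimes k}$ without common zero on all of $X$, and adjoining the separating sections from $X^{(1)}$ produces the required $s_0,\dots,s_N$.

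The reduction to a finite-dimensional linear system and the check that $(\phi,[s_0:\dots:s_N])$ is a closed embedding are routine. The real content, and the only place where the hypothesis is used, is the relative vanishing $H^q(X,\cal I_S^2\otimes L^{\otimes k})=0$ for $k\gg0$ — equivalently, that a line bundle ample on the compact fibres of a proper holomorphic map becomes, after a sufficiently high tensor power, relatively globally generated together with its $1$-jets along those fibres. This is the analytic counterpart of Serre's vanishing theorem and, in the $1$-convex setting, goes back to Grauert; I would quote it rather than reprove it.
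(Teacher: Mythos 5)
The paper offers no proof of this proposition: it is quoted as Schneider's embedding criterion \cite{sch}, so there is no in-text argument to compare yours against. Judged on its own merits, your proof follows what is essentially the standard (Schneider-style) route and its architecture is sound: the easy direction via the maximum principle is fine; in the converse, combining the embedding $Y\hookrightarrow\C^M$ of the Remmert reduction with sections of $L^{\otimes k}$ correctly reduces everything to separating points and tangent vectors along $S$; the observation that the first infinitesimal neighbourhood $X^{(1)}$ has the same Zariski tangent spaces as $X$ along $S$ is correct (since ${\cal I}_{S,p}^{2}\subset\mathfrak m_p^2$); finite generation of $\pi_*L^{\otimes k}$ by global sections over the Stein base and the properness argument (a map to $\C^M\times\P^N$ is proper iff its $\C^M$-component is --- you wrote ``projection onto the compact factor'', which should be the projection onto $\C^M$) are all in order, and an injective proper immersion of manifolds is indeed a closed embedding.

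The one point to treat with more care is the step you yourself single out and quote: the passage from ampleness of $L_{|S}$ --- i.e.\ on the \emph{reduced} special fibre --- to relative ampleness of $L$ near $T$ and the relative vanishing $R^q\pi_*({\cal F}\otimes L^{\otimes k})=0$ for $q\ge 1$, $k\gg 0$, in the \emph{analytic} category. This is genuinely a theorem and not formal: fibrewise statements do not automatically propagate for proper holomorphic maps, and the uniformity of $k$ over all infinitesimal neighbourhoods of the fibre is exactly what must be proved (the analytic analogue of the theorem on formal functions plus relative Serre vanishing; see Knorr--Schneider, the book of B\u{a}nic\u{a}--St\u{a}n\u{a}\c{s}il\u{a}, or \cite{sch} itself, which is built around precisely this point). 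Since you flag that you would cite this rather than reprove it, your write-up is acceptable as a deduction of the proposition from that input, but be aware that the input is the heart of the result, so the citation should be precise rather than a generic appeal to ``Grauert/Serre''.
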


We from now on only consider the case of one-dimensional exceptional
sets $S$. 
There is the following topological criterion.
\begin{theorem}[\cite{AB1}]
Let $X$ be  a $1$-convex manifold  with one-dimensional exceptional
set $S$. Then $X$ is K\"ahler if and only if  $S$ does not contain
an effective curve $C$, whose class in $H_2(X,\Z)$ vanishes.
If moreover $H_2(X,\Z)$ is 
finitely generated, then these conditions are equivalent to the 
fact that $X$ is embeddable.
\end{theorem}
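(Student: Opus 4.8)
\medskip
\noindent\emph{Proof strategy.} The plan is to run the cycle of implications: $X$ K\"ahler $\Rightarrow$ no homologically trivial effective curve in $S$ $\Rightarrow$ $X$ K\"ahler, and, under the extra hypothesis, $X$ K\"ahler $\Rightarrow$ $X$ embeddable $\Rightarrow$ $X$ K\"ahler. The two implications \emph{towards} K\"ahlerness are immediate: if $\omega$ is a K\"ahler form and $C=\sum n_iC_i\subseteq S$ is an effective curve ($n_i\ge0$ not all zero, $C_1,\dots,C_r$ the irreducible components of $S$), then $\int_C\omega=\sum n_i\int_{C_i}\omega>0$ since $\omega$ restricts to an area form on each curve $C_i$, so $[C]$ is nonzero already in $H_2(X,\R)$ and hence in $H_2(X,\Z)$; and an embeddable $X$ is a closed submanifold of the K\"ahler manifold $\C^M\times\P^N$, hence K\"ahler. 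Neither of these uses that $\dim S=1$ or that $H_2(X,\Z)$ is finitely generated.

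The content is the implication: no homologically trivial effective curve in $S$ $\Rightarrow$ $X$ K\"ahler. Since a positive multiple of an effective curve is effective, the hypothesis prevents $\sum n_i[C_i]$ ($n_i\ge0$ not all zero) from being zero or torsion, hence it is nonzero in $H_2(X,\Q)$. Inside the finite-dimensional subspace $V\subseteq H_2(X,\R)$ spanned by $[C_1],\dots,[C_r]$, the convex cone $\sigma$ generated by these classes is therefore pointed, so its dual cone in $V^{*}$ is full-dimensional; picking a functional in its interior and extending it (using $H^2(X,\R)=\mathrm{Hom}(H_2(X,\R),\R)$) produces a class $h\in H^2(X,\R)$ with $\langle h,[C_i]\rangle>0$ for every $i$. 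It remains to upgrade $h$ to a K\"ahler class.

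For this I would use that a suitable neighbourhood $W$ of $S$ admits a proper modification $f\colon W\to Z$ onto an analytic subset $Z$ of a ball in some $\C^N$, with \emph{one-dimensional} exceptional fibres --- obtained by embedding a neighbourhood of the finite set $T=\pi(S)$ in the Stein space $Y$, where $\pi\colon X\to Y$ is the Remmert reduction. One-dimensionality of the fibres makes $R^{q}f_{*}\sier W$ vanish for $q\ge2$ and forces $H^{2}(W,\sier W)=0$, so the exponential sequence gives a line bundle $L$ on $W$ with prescribed degrees $\deg(L|_{C_i})>0$; the curvature of a suitable hermitian metric on $L$ is then a K\"ahler form near $S$ representing a positive multiple of $h|_W$. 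On the other hand $\psi\circ\pi$, for $\psi$ a strictly plurisubharmonic exhaustion of $Y$, is a plurisubharmonic exhaustion of $X$ that is strictly plurisubharmonic throughout $X\setminus S$; so $i\partial\bar\partial(\chi\circ\psi\circ\pi)$ is positive away from $S$ for $\chi$ convex increasing. Gluing this to the local K\"ahler form across a collar --- taking $\chi$ to grow fast enough that the strict plurisubharmonicity absorbs the cut-off error terms, and patching local potentials so that the result stays $d$-closed --- yields a global K\"ahler form. I expect this passage, from the cohomological inequality $\langle h,[C_i]\rangle>0$ to an honest metric (local realization near $S$ plus the plurisubharmonic gluing), to be the main obstacle.

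For the ``moreover'', assume $H_2(X,\Z)$ finitely generated and $X$ K\"ahler. Running the previous construction with $h$ the K\"ahler class gives a line bundle $L$ on $W$ with $L|_S$ ample --- on the one-dimensional exceptional set it is enough that $\deg(L|_{C_i})>0$ for all $i$. One then extends $L$ to a line bundle on all of $X$: since $X\setminus S\cong Y\setminus T$ with $Y$ Stein, the discrepancy between $\Pic(W)$ and $\Pic(X)$ is governed --- through the Mayer--Vietoris comparison of $\Pic(W)$, $\Pic(X\setminus S)$ and $\Pic(W\setminus S)$ together with the finiteness of the coherent cohomology of the $1$-convex manifold $X$ --- by the finitely generated group $H_2(X,\Z)$, and $L$, after twisting by a bundle pulled back from $Y$ (which leaves $L|_S$ unchanged), extends. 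The extension is again ample on $S$, so the Proposition of \cite{sch} recalled above applies and $X$ is embeddable. The bookkeeping in this last step, and the precise vanishing statements that one-dimensionality of $S$ yields, are what I would fill in to complete the argument.
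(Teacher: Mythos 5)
Note first that the paper does not prove this statement: it is quoted from Alessandrini--Bassanelli \cite{AB1}, so your attempt can only be measured against the known proof there. Your two ``easy'' implications (K\"ahler $\Rightarrow$ no null-homologous effective curve, via Stokes; embeddable $\Rightarrow$ K\"ahler, as a closed submanifold of $\C^M\times\P^N$) are fine. The genuine gap is exactly the implication you yourself flag as ``the main obstacle'', and it is not a technical patching issue but the entire content of the theorem. After you choose $h$ with $\langle h,[C_i]\rangle>0$, your construction only ever uses $h|_W$ on a neighbourhood $W$ of $S$; but a line bundle on such a $W$ with positive degree on every $C_i$ exists for \emph{every} $1$-convex manifold with one-dimensional exceptional set (a suitable $W$ retracts onto $S$, and $H^2(W,\sier W)=0$ makes $\Pic(W)\to H^2(W,\Z)\cong\Z^r$ surjective), with no homological hypothesis on $X$ whatsoever. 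So if the local-form-plus-$i\partial\bar\partial$-exhaustion gluing worked as described, it would show that every such $X$ is K\"ahler --- contradicting the very examples this paper is about: for the small resolution of a nodal affine hypersurface \cite{mo,vvt1,co}, a neighbourhood of the exceptional $(-1,-1)$-curve is even quasi-projective (hence K\"ahler near $S$), the manifold is $1$-convex, and yet it is not K\"ahler because the curve bounds a $3$-chain. The cut-off destroys closedness of the glued form, and restoring closedness is precisely where the global hypothesis must enter; your sketch has no mechanism for it, since the hypothesis is only used to produce $h$, which the rest of the argument does not really need.

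The proof in \cite{AB1} proceeds quite differently, by duality with positive currents (in the Harvey--Lawson/Michelsohn spirit, which is the technical machinery of Alessandrini--Bassanelli's papers): if $X$ is not K\"ahler one obtains a nonzero positive $(1,1)$-current of a suitable boundary type; strict plurisubharmonicity of the exhaustion off $S$ forces its support into the one-dimensional set $S$; the support theorem for positive currents then writes it as $\sum\lambda_i[C_i]$ with $\lambda_i\ge 0$, and its exactness produces (after passing from real to rational and then integral coefficients, which is where an argument like your Farkas-type reduction genuinely belongs) an effective curve in $S$ with vanishing class --- contradicting the hypothesis. Your ``moreover'' part has a parallel problem: the extension of $L$ from $W$ to all of $X$ via a Mayer--Vietoris comparison is asserted rather than proved, and it is exactly there that finite generation of $H_2(X,\Z)$ must be converted into an \emph{integral} class on $X$ positive on the $[C_i]$, realized by a line bundle on $X$ via the exponential sequence, so that Schneider's criterion \cite{sch} applies; as written this is bookkeeping by fiat, not a proof.
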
 
Similar results are obtained in \cite{vvt1}.
Non-embeddable $1$-convex manifolds are very special.

\begin{theorem}[\cite{co,vvt3}]
If a $1$-convex manifold $X$ with one-dimensional exceptional
set $S$ is not embeddable, then $X$ has dimension three and
$S$ has an irreducible component $C$ with $K_X\cdot C=0$.
\end{theorem}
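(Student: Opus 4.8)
The plan is to reduce the statement, via Schneider's criterion \cite{sch}, to the existence of a line bundle with prescribed positive degrees: $X$ is embeddable if and only if there is $L\in\Pic(X)$ with $\deg(L|_{C_i})>0$ for every irreducible component $C_i$ of $S_{\mathrm{red}}$, ampleness on the compact curve $S$ being tested component by component. Since $K_X$ and $-K_X$ are both candidates for such an $L$, the hypothesis that $X$ is not embeddable already yields components $C$ and $C'$ of $S$ with $K_X\cdot C\le 0$ and $K_X\cdot C'\ge 0$. What remains is to merge these into a single component with $K_X$-degree $0$ and to bound $\dim X$ from above.

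The lower bound is easy. If $\dim X=1$ there is nothing to contract; and if $\dim X=2$ then, $\pi$ being a contraction of the curve $S$, the intersection matrix $(C_i\cdot C_j)$ is negative definite, so Artin's construction of the fundamental cycle (clearing denominators) produces an effective divisor $Z_0=\sum a_iC_i$ with all $a_i>0$ and $Z_0\cdot C_j<0$ for every $j$; then $L=\sier{X}(-Z_0)$ has $\deg(L|_{C_j})=-Z_0\cdot C_j>0$ and $X$ is embeddable. Hence $\dim X\ge 3$.

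For the topological input I would compute cohomology through $\pi$: the fibres being at most $1$-dimensional, $R^q\pi_*\sier{X}=0$ for $q\ge 2$, while $R^1\pi_*\sier{X}$ is a coherent sheaf supported on the finite set $T$; since $Y$ is Stein this forces $H^q(X,\sier{X})=0$ for $q\ge 2$, and hence, by the exponential sequence, that $c_1\colon\Pic(X)\to H^2(X,\Z)$ is surjective. Also $Y$ has the homotopy type of a finite CW-complex (it is Stein) and $X$ is obtained from it by modifying along finitely many curves, so $H_2(X,\Z)$ is finitely generated. The criterion of \cite{AB1} therefore applies, and non-embeddability of $X$ gives an effective curve $Z=\sum m_iC_i\subseteq S$, with $m_i\ge0$ not all zero, such that $[Z]=0$ in $H_2(X,\Z)$. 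In particular $D\cdot Z=0$ for every $D\in\Pic(X)$, so $\sum m_i\,(K_X\cdot C_i)=K_X\cdot Z=0$.

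The two remaining points are the heart of the matter, and I expect the first to be the main obstacle. First one must rule out the ``mixed-sign'' case, in which the integers $K_X\cdot C_i$ with $m_i>0$ are nonzero and merely cancel. I would do this by running a minimal model program for $\pi\colon X\to Y$ relative to $Y$ (in the analytic/K\"ahler category, as $\pi$ need not be projective): because $S$ is $1$-dimensional no contraction is divisorial, so the program consists of flips and flops over $Y$ and never leaves the class of terminal $1$-convex spaces over $Y$ with $1$-dimensional exceptional set; after finitely many steps one reaches $\pi'\colon X'\to Y$ with $K_{X'}$ relatively nef, and since $K_{X'}$ relatively ample would force $\pi'$ to be an isomorphism (while $Y$ is singular), some curve over the singular point spans a flopping ray, and undoing the flops exhibits a component $C\subseteq S$ with $K_X\cdot C=0$. (A more elementary route: analyse the finitely many components with $K_X\cdot C_i<0$ directly, show each is a rational curve that can be flopped away one at a time, and induct on their number.) Second, the bound $\dim X\le 3$ is local at a contracted point $p$: there $X\to Y$ is a small modification of a normal singularity whose exceptional fibre contains a curve of $K_X$-degree $0$, and one checks that no such modification exists when $\dim\ge 4$ — for instance the crepant-over-$p$ part would contradict parafactoriality of $p$ (Grothendieck--Lefschetz), much as an ordinary double point admits a small resolution only in ambient dimension three. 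Together with $\dim X\ge 3$ this gives $\dim X=3$, and with the component $C$ found above the proof is complete.
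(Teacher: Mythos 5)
First, note that the paper does not prove this theorem at all: it is quoted with the citations \cite{co,vvt3}, so your outline can only be measured against the statement itself and the literature. The routine parts of your plan are fine: Schneider's criterion \cite{sch} does reduce embeddability to a line bundle of positive degree on every component of $S$, the surface case does follow from negative definiteness of the intersection matrix, and one does obtain an effective cycle $Z=\sum m_iC_i$ supported on $S$ with $K_X\cdot Z=0$. But your justification of the finite generation of $H_2(X,\Z)$ (``$Y$ is Stein, hence has the homotopy type of a finite CW-complex'') is false --- Stein spaces can have infinite topology --- and this hypothesis is exactly what \cite{AB1} needs to pass from non-embeddable to non-K\"ahler. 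The detour through \cite{AB1} is in fact unnecessary: non-embeddability says that the image of $\Pic(X)$ in $\mathrm{Hom}(\bigoplus_i\Z[C_i],\Z)\cong\Z^r$ misses the open positive orthant, and a separation lemma of Stiemke/Gordan type already produces integers $m_i\ge0$, not all zero, with $\sum m_i\,(L\cdot C_i)=0$ for every $L\in\Pic(X)$, in particular for $K_X$. That repair is easy; the real problems lie elsewhere.

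The two steps you yourself flag as the heart of the matter are genuine gaps, not technicalities. (i) To upgrade $K_X\cdot Z=0$ to a single component with $K_X\cdot C=0$ you invoke a relative MMP with flips ``in the analytic/K\"ahler category''; but at that stage $\dim X$ is unknown, $X$ is by hypothesis non-embeddable and hence (in the case at issue) non-K\"ahler, and no MMP --- in particular no existence or termination of analytic flips --- is available in this generality; even Mori's analytic $3$-fold flips could only be quoted after $\dim X=3$ is established, which is the other thing you still have to prove. The ``more elementary route'' of flopping away the $K_X$-negative components is likewise unsupported: you would need contractibility of each such component separately, which does not follow from contractibility of $S$. (ii) The bound $\dim X\le3$ cannot rest on parafactoriality: SGA2-type parafactoriality applies to local complete intersections of dimension $\ge4$, whereas $Y$ is only a normal Stein blow-down, and small modifications with irreducible $1$-dimensional exceptional set of $4$-fold points do exist --- the total space of $\sier{\P^1}(-1)^{\oplus 3}$ contracts its zero section to the cone over $\P^1\times\P^2$ (there $K_X\cdot C=1$). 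So any correct argument must use $K_X\cdot C=0$ in an essential way, and you give none; moreover your intended logical order (first produce the $K$-trivial component via MMP in unknown dimension, then exclude $\dim\ge4$) makes (i) depend on machinery that is hopeless precisely when $\dim X\ge4$. The cited proofs of Col\c toiu and Vo Van Tan proceed along different lines (embeddability in dimensions $\ne3$ and, in dimension three, an analysis of the exceptional rational curves and their normal bundles in the spirit of Laufer \cite{la}), so the statement should not be regarded as established by your outline.
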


It follows that $C$ is a rational curve, with normal bundle of type
$\sier C(a)\oplus \sier C (b)$, satisfying $a+b=-2$.
The only possibilities are  $(-1,-1)$, $(0,-2)$ or $(1,-3)$
\cite{la}, see also \cite[Lecture 16]{CKM}.

To describe the singularities of $Y$ we look at the germ of $X$ along
$S$. 
Let $\pi\colon (X,S)\to (Y,p)$ be a small contraction 
(we call the map $\pi$ a small contraction or small
resolution
depending on whether we view $X$ or $Y$ as the primary object)
with $X$ smooth
and $K_X$ $\pi$-trivial, i.e., $K_Y\cdot C=0$ for every irreducible
component of $S$. Then $Y$ is Gorenstein terminal, so a 
\textit{cDV}-singularity.
This means that a hyperplane section through the 
singular point is a rational double point
(a.k.a. DuVal singularity).

\begin{prop}[\cite{re}]
If $(H,p)$ is a generic hyperplane section of the cDV-singularity $(Y,p)$
with small resolution  $\pi\colon (X,S)\to (Y,p)$, then $G:=\pi^*H$ is normal
and the induced map $f\colon (G,S)\to  (Y,p)$ is a partial resolution, 
dominated by the minimal resolution.
\end{prop}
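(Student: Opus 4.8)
The plan is to reduce everything to the known structure theory of Du Val singularities and their resolutions, working one-parameter-family style. Since $(Y,p)$ is cDV, the generic hyperplane section $(H,p)$ is a Du Val singularity, and $(Y,p)$ together with the projection to the hyperplane-section parameter is a one-parameter deformation of $(H,p)$ of a very special kind: by definition of cDV the total space $Y$ is smooth away from $p$ and has a one-dimensional singular locus only at $p$ — in fact $(Y,p)$ is the total space of a deformation of $(H,p)$ induced from the versal deformation by a map from the disc. First I would set up this picture carefully: choose coordinates so that $H=\{h=0\}\subset Y$ with $h$ a generic linear form, and note $X\setminus S\cong Y\setminus\{p\}$ so $G:=\pi^*H=\pi^{-1}(H)$ agrees with the closure of $H\setminus\{p\}$ in $X$.

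The key step is to identify $f\colon (G,S)\to (H,p)$ with a partial resolution coming from simultaneous resolution. Brieskorn–Tyurina theory says the versal deformation of a Du Val singularity admits, after a finite base change (the quotient by the Weyl group action), a simultaneous resolution; every partial simultaneous resolution is obtained by contracting some of the exceptional curves, equivalently by base change through an intermediate quotient. Because $K_X$ is $\pi$-trivial, the small resolution $X\to Y$ restricts on the generic hyperplane section to a \emph{crepant} partial modification of $(H,p)$; but a Du Val singularity is canonical, so any crepant partial resolution is automatically a partial resolution dominated by the minimal one — there are no extra blow-ups and no worse singularities introduced. Concretely, I would argue: $G$ is Cohen–Macaulay (being a Cartier divisor $\pi^*H$ in the smooth $X$), its singularities lie over $p$, and along $S$ the surface $G$ is the fibre of the induced family; adjunction on $X$ gives $K_G=(K_X+G)|_G=(K_X+\pi^*H)|_G$, which pulls back from $K_H$ since $K_X=\pi^*K_Y$ near $S$, so $f$ is crepant. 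A crepant birational morphism onto a Du Val singularity has the minimal resolution $\widetilde H\to H$ factoring through it, which gives the domination statement, and since $\widetilde H$ is smooth and $G$ is obtained by blowing down $(-2)$-curves, $G$ has only Du Val (in fact $A$-$D$-$E$, a sub-configuration) singularities — in particular $G$ is normal.

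For normality specifically, the cleanest route is Serre's criterion: $G$ is $S_2$ because it is a Cartier divisor in the smooth threefold $X$ (hence Cohen–Macaulay, hence $S_2$), and $G$ is $R_1$ because $G$ is smooth outside the finite set $S\cap(\text{sing locus})$, which since $\dim G=2$ is codimension $2$ — wait, one must check $G$ is generically reduced and regular in codimension one, i.e. that $G$ is not singular along a curve. This is where the genericity of $H$ enters: for a generic hyperplane through $p$ the total space of the family $Y\to\Delta$ has $Y$ smooth off $p$, so $G=\pi^{-1}(H)$ is smooth off $\pi^{-1}(p)\cap G$, which is contained in $S$ and hence at worst a finite set of points together with possibly components of $S$; but an irreducible component of $S$ is a rational curve contracted by $\pi$, and $G$ is smooth at the generic point of such a curve because $X$ is smooth there and $G=\pi^*H$ with $H$ generic is a smooth divisor transverse to $S$ generically. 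So $G$ is regular in codimension one, giving normality by Serre.

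I expect the main obstacle to be the verification that $G$ is singular only at finitely many points — equivalently that $G\to H$ does not contract a divisor and that $G$ has isolated singularities — which is exactly the place where ``$H$ generic'' is doing real work and where one must invoke Bertini-type transversality for the map $\pi$ together with the classification of $(-1,-1)$, $(0,-2)$, $(1,-3)$ curves to rule out $S$ sitting inside the singular locus of $G$. Once that is in hand, the crepancy computation via adjunction and the Brieskorn–Tyurina identification of crepant partial resolutions of Du Val singularities with partial simultaneous resolutions finishes the proof; these last two steps are essentially formal given the earlier results recalled in this section.
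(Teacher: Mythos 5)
The paper itself gives no proof of this Proposition --- it is quoted from Reid's pagoda paper \cite{re} --- so your attempt can only be measured against the standard argument there. Your outline handles the formal parts correctly: $G=\pi^*H$ is a Cartier divisor in the smooth threefold $X$, hence Cohen--Macaulay, without embedded components, reduced and equal to the strict transform of $H$; and adjunction together with $K_X=\pi^*K_Y$ (automatic for a small morphism) makes $f\colon G\to H$ crepant. But the crucial point, normality, is not established. Your $R_1$ step is exactly where the proposal breaks down: every member of the pulled-back linear system of hyperplanes through $p$ contains $S$, so Bertini gives nothing along $S$, and the asserted ``generic transversality of $\pi^*H$ to $S$'' is precisely the nontrivial content, not a consequence of genericity. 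Smoothness of $G$ at the generic point $\eta_C$ of a component $C\subset S$ means $h\circ\pi\notin\mathfrak m_{\eta_C}^2$ for generic $h\in\mathfrak m_{Y,p}$; when the length $\lg\,\mathcal O_X/\pi^*\mathfrak m_{Y,p}$ at $\eta_C$ is at least $2$ (all the $(1,-3)$ cases this paper cares about) the ideal $\pi^*\mathfrak m_{Y,p}$ is non-reduced along $C$, and for length $\ge 3$ nothing you have said prevents it from lying inside $\mathfrak m_{\eta_C}^2$, which would make \emph{every} hyperplane pullback singular along $C$. You flag this yourself as ``the main obstacle'' and leave it open, so the proof is incomplete at the one place where real work is needed.

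There is also a circularity in your fallback: you infer that $G$ has Du Val singularities (hence is normal) because it is ``obtained by blowing down $(-2)$-curves on $\widetilde H$'', but the factorization of $\widetilde H\to H$ through $G$ is only available once $G$ is known to be normal; for a non-normal $G$ the birational map $\widetilde H\dashrightarrow G$ need not be a morphism and ``$G$ is a contraction of curves on $\widetilde H$'' has no meaning. The standard resolution of both difficulties (Reid's, cf.\ \cite{re}, \cite{KM}) is to derive normality \emph{from} crepancy rather than from transversality: $G$ is integral and Gorenstein with $\omega_G=f^*\omega_H$ by adjunction; if $G$ were not normal, the conductor of the normalization $\bar G\to G$ would exhibit a divisor of discrepancy $\le -1$ for the birational morphism $\bar G\to H$, contradicting that the Du Val singularity $(H,p)$ is canonical. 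Once $G$ is normal, crepancy forces $G$ to have only Du Val singularities and to be dominated by the minimal resolution of $H$, which is the remaining assertion. If you replace your Bertini/$R_1$ step by this conductor--discrepancy argument, the rest of your outline goes through.
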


In particular, if $\pi$ contracts only one rational curve, the partial
resolution is obtained by blowing down all exceptional curves on the
minimal resolution $\wt H$ of $H$, except one. The type of $H$
is determined by Koll\'ar's length invariant  \cite[Lecture 16]{CKM}.

\begin{defn}\label{lengthdef}
The \textit{length} $l$ of the small contraction  $\pi\colon (X,C)\to (Y,p)$
with irreducible exceptional curve $C$ is 
\[
l=\lg \sier X/\pi^*\mathfrak m_{Y,p}\;.
\]
\end{defn}
The length equals the multiplicity of the fundamental cycle of $\wt H$
at the strict transform of the exceptional curve $C$.

\begin{prop}[\cite{KM}]
The length of the small contraction $\pi$ determines the type of the
general hyperplane section $H$ and the partial resolution $G\to H$.
\end{prop}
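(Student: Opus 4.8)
The plan is to reduce the statement to a combinatorial assertion about Du Val dual graphs and then invoke the classification of the threefold germs that actually occur.

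First I would record what the data $(H,\,G\to H)$ amounts to. By the preceding results, $G=\pi^{*}H$ is normal and $f\colon(G,S)\to(Y,p)$ is a partial resolution of the Du Val singularity $(H,p)$, dominated by its minimal resolution $\wt H$; since $\pi$ contracts the single irreducible curve $C$, the surface $G$ is obtained from $\wt H$ by contracting every exceptional $(-2)$-curve except the strict transform $\wt C$ of $C$. Thus the pair $(H,\,G\to H)$ is completely encoded by the Dynkin diagram $\Gamma$ dual to $\wt H\to H$ together with the choice of vertex $v\in\Gamma$ corresponding to $\wt C$, and, as noted after Definition~\ref{lengthdef},
\[
 l=\lg\bigl(\sier X/\pi^{*}\mathfrak m_{Y,p}\bigr)=m_{v},
\]
the multiplicity of $E_{v}$ in the fundamental cycle $Z_{\Gamma}=\sum_{i}m_{i}E_{i}$. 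Consequently the proposition is equivalent to the statement that, among the finitely many pairs $(\Gamma,v)$ that arise from a small contraction of a terminal Gorenstein threefold, the integer $m_{v}$ takes pairwise distinct values; for then $l=m_{v}$ recovers $(\Gamma,v)$ and hence both the type of $H$ and the blow-down $G\to H$.

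Next I would pin down which pairs $(\Gamma,v)$ occur. Here one uses that $(X,C)\to(Y,p)$ is a flopping contraction of a terminal Gorenstein threefold, so that $C\cong\P^{1}$ has normal bundle $(-1,-1)$, $(0,-2)$ or $(1,-3)$ by Laufer \cite{la}, and --- this is the substance of \cite{KM} --- that $X$ is obtained, after a ramified base change of the parameter, by pulling back the simultaneous partial resolution of the semiuniversal deformation of $\wt H$ attached to the parabolic subgroup $W_{\Gamma\setminus\{v\}}\subset W(\Gamma)$. Demanding that the curve $E_{v}$ persist in a one-parameter deformation with smooth general fibre forces $\Gamma\setminus\{v\}$ to be a disjoint union of chains of type $A$ and restricts $\Gamma$ itself; the Weyl-group bookkeeping (equivalently, inspection of the explicit normal forms of the terminal germs) leaves precisely the possibilities
\[
 \Gamma\in\{A_{1},\,D_{4},\,E_{6},\,E_{7},\,E_{8}\},
\]
with $v$ the trivalent vertex in the four non-trivial cases, and the corresponding multiplicities $m_{v}=1,\,2,\,3,\,4,\,6$.

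Finally, since $1,2,3,4,6$ are pairwise distinct, $l$ determines $(\Gamma,v)$, and therefore the type of $H$ and the partial resolution $G\to H$, which is the claim. The main obstacle is the classification invoked in the middle step: computing $m_{v}$ for a given pair is routine, but excluding the a priori plausible configurations --- for instance $(A_{n},v)$ with $n\ge2$, or $(D_{n},\text{trivalent }v)$ with $n\ge5$, each of which would collide with $A_{1}$ respectively $D_{4}$ in the value of $l$ --- requires the deformation-theoretic input of \cite{KM}, namely control of the monodromy of the relevant one-parameter families (or the invariant-theoretic description of the contraction data in terms of $W(\Gamma)$). Granting that classification, the proposition follows at once.
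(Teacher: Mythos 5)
There is a genuine problem with your enumeration, and it matters because your whole argument is ``grant the classification, then observe that the multiplicities are pairwise distinct.'' The paper itself does not prove this proposition --- it quotes it from \cite{KM} and points to Kawamata \cite{ka} for a simple proof --- so the only substantive content of your write-up is the list of pairs $(\Gamma,v)$ and the values $m_v$, and that list is wrong: you allow only $v$ equal to the trivalent vertex and get $m_v\in\{1,2,3,4,6\}$, omitting the length~$5$ case, namely $\Gamma=E_8$ with $v$ the multiplicity-$5$ vertex adjacent to the trivalent one (complement $A_4\sqcup A_3$). This case does occur; the paper states explicitly that both $l=5$ and $l=6$ arise, with $H$ of type $E_8$ in both cases and with \emph{different} partial resolutions --- which is precisely why the proposition asserts that $l$ determines the partial resolution and not merely the type of $H$. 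With the corrected list the values $1,\dots,6$ are still pairwise distinct, so your skeleton survives, but as written the classification you ``grant'' is not the one in \cite{KM}.

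A second, related gap: the step ``persistence of $E_v$ in a one-parameter smoothing forces $\Gamma\setminus\{v\}$ to be a union of $A$-chains, which restricts $\Gamma$ to $A_1,D_4,E_6,E_7,E_8$ with $v$ trivalent'' is neither proved nor a valid characterization. For instance $(D_n,\text{trivalent vertex})$ with $n\ge5$ also has complement $A_1\sqcup A_1\sqcup A_{n-3}$, yet does not occur; excluding it (and $(A_n,v)$ with $n\ge2$, and the other $E$-vertices) is exactly the hard invariant-theoretic/deformation-theoretic content of \cite{KM}, as you yourself concede at the end. So your argument reduces the proposition to essentially the full statement of the theorem being cited, i.e.\ it is circular as a proof; as a reduction it would be acceptable in the spirit of the paper (which also only cites the result), but then the cited classification must at least be reproduced correctly, including the $l=5$ case.
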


A simple proof is given by Kawamata \cite{ka}.
For length $l=1$ the general hyperplane section is of type $A_1$.
This occurs for normal bundle of type $(-1,-1)$ or $(0,-2)$.
For  $(1,-3)$ the length lies between $2$ and $6$, with for $l=2,3,4$
general hyperplane section $D_4$, $E_6$ and $E_7$. If $l=5,6$, then $H$
has an $E_8$-singularity.

This result suggests how to construct examples of small contractions:
start with a partial resolution of a rational double point with 
irreducible exceptional curve, and take a 1-parameter smoothing of it,
such that the exceptional curve is isolated. As the singularity is rational,
the deformation blows down to a deformation of the rational double point.
Typically this construction leads to an affine hypersurface with 
embeddable small resolution.
Examples of non-embeddable spaces with a singularity of length 1 were given
by Col\c toiu, Vo Van Tan and Bassanelli--Leoni \cite{co, vvt1, BL}, see
also \cite{co2,vvt3}.

For an affine threefold $Y$ with small resolution $\pi\colon X\to Y$
embeddability of $X$ is closely related to projectivity
of the corresponding small resolution of the projective closure of $Y$.
More precisely, we have the following result of \cite{AB2},
which was proved earlier  in the special case of hypersurfaces in \cite{V}.
A similar result is proved in \cite{vvt1}.

\begin{theorem}[\cite{AB2}]
Let  $\pi\colon X\to Y$ be a contraction of the $1$-convex manifold 
$X$, with $Y$ Stein and quasi-projective, of dimension at least
3.
Let $(\wl Y,\wl Y_\infty)$ be the projective closure of $Y$
and assume that $\Sing(\wl Y)=\Sing(Y)$. Let $(\wl X,\wl X_\infty)$ 
be the corresponding compactification, with the same divisor 
$\wl X_\infty=\wl Y_\infty$ at infinity. 
Suppose that the map
$H_2(X,\R)\to H_2(\wl X,\R)$ is injective.
Then $X$ is embeddable (this is so  if and only if $X$ is K\"ahler)
if and only if $\wl X$ is projective.
\end{theorem}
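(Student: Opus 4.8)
The plan is to reduce both conditions to the same criterion on the exceptional set $S=\pi^{-1}(\Sing Y)$ and then to read off the equivalence from the hypothesis on $H_2$. First I record two reductions. Since $X$ is a $1$-convex manifold with one-dimensional exceptional set and has the homotopy type of $Y$ with finitely many points replaced by compact curves, $H_2(X,\Z)$ is finitely generated; by the theorem of \cite{AB1} (this is exactly the parenthetical remark in the statement), $X$ is embeddable if and only if $X$ is K\"ahler, and this holds if and only if $S$ carries no effective $1$-cycle whose class vanishes in $H_2(X,\R)$ --- the condition of \cite{AB1}, being tested against a K\"ahler form, is really a condition in real homology, so torsion classes play no role. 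Secondly, $\wl X$ is a compact complex manifold bimeromorphic to the projective variety $\wl Y$, hence Moishezon, so by Moishezon's theorem $\wl X$ is projective if and only if it is K\"ahler. Thus it suffices to prove that $X$ is K\"ahler if and only if $\wl X$ is K\"ahler. One implication is immediate: $X=\wl X\setminus\wl X_\infty$ is an open complex submanifold of $\wl X$, so a K\"ahler metric on $\wl X$ restricts to one on $X$.

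For the converse I would prove the analogue for $\wl X$ of the criterion of \cite{AB1}: the compact manifold $\wl X$ is projective if and only if no effective $1$-cycle supported on $S$ is homologous to zero in $H_2(\wl X,\R)$. One direction is again trivial, since a nonzero effective cycle pairs positively with a K\"ahler class. For the other direction, fix an ample line bundle $A$ on $\wl Y\subset\P^N$; its pull-back $A':=\pi^*A$ is nef and big on $\wl X$, with $A'\cdot C=0$ precisely for the curves contracted by $\pi$, that is, those supported on $S$. Let $C_1,\dots,C_r$ be the irreducible components of $S$. Absence of an effective numerically trivial combination $\sum a_iC_i$ is equivalent, by the Gordan--Stiemke alternative applied to the intersection pairing of $\Pic(\wl X)\otimes\R$ against the $C_i$, to the existence of a Cartier divisor $D$ on $\wl X$ with $D\cdot C_i>0$ for every $i$ --- here one uses that numerical triviality of a $1$-cycle supported on $S$ coincides with its vanishing in $H_2(\wl X,\R)$, because on the small modification $\wl X$ of the projective $\wl Y$ the cohomology classes detecting such cycles are of divisor type (compare \cite{ko}). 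Given such a $D$, one shows that for $n\gg0$ the line bundle associated with $D+nA'$ is very ample: pushing forward to $\wl Y$ and combining relative vanishing for the small morphism $\pi$ with Serre vanishing and global generation on the projective $\wl Y$, one checks that its sections separate the points and tangent directions of $\wl X$ --- along $S$ because $D|_S$ is ample on the curve $S$, and elsewhere because $A$ is ample on $\wl Y$ and no subvariety of positive dimension is contracted onto a point except curves inside $S$. Hence $\wl X$ is projective.

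With the two criteria in place the theorem follows from the injectivity of $\phi\colon H_2(X,\R)\to H_2(\wl X,\R)$. If $X$ is not K\"ahler there is an effective $1$-cycle $Z$ supported on $S\subset X$ with $[Z]=0$ in $H_2(X,\R)$; then $\phi[Z]=0$, so $Z$ is an effective cycle on $S\subset\wl X$ homologous to zero and $\wl X$ is not projective. Conversely, if $\wl X$ is not projective there is an effective $1$-cycle $Z$ supported on $S$ with $[Z]=0$ in $H_2(\wl X,\R)$; since $Z$ already lives on $S\subset X$ and $\phi$ is injective, its class in $H_2(X,\R)$ vanishes, so $X$ is not K\"ahler. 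Combining with the reductions above yields the chain: $X$ embeddable $\iff$ $X$ K\"ahler $\iff$ $\wl X$ K\"ahler $\iff$ $\wl X$ projective.

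I expect the main obstacle to be the projectivity criterion for $\wl X$: identifying numerical triviality on $S$ with vanishing in $H_2(\wl X,\R)$, running the duality that produces the divisor $D$, and --- above all --- the vanishing-and-sections argument showing that $D+nA'$ is very ample on a $\wl X$ that is, a priori, only a proper algebraic space rather than a projective scheme. The remaining ingredients, that $\wl X$ is Moishezon and that $H_2(X,\Z)$ is finitely generated, are routine.
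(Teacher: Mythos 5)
First, note that the paper does not prove this statement at all: it is quoted from \cite{AB2} (with the hypersurface case attributed to \cite{V}), so there is no in-paper argument to compare with; in \cite{AB2} the proof is analytic, via positive currents of bidimension $(1,1)$ and their transforms under modifications, which is a genuinely different route from the algebraic one you sketch. Your overall architecture is sound: reducing embeddability of $X$ to the homological criterion of \cite{AB1} (real coefficients are harmless there, and $H_2(X,\Z)$ is indeed finitely generated), observing that $\wl X$ is Moishezon so that projectivity is equivalent to K\"ahlerness, and then using the injectivity of $H_2(X,\R)\to H_2(\wl X,\R)$ to shuttle null-homologous effective $1$-cycles back and forth is exactly how the two sides should be matched.

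The genuine gap is the projectivity criterion for $\wl X$, and specifically the step from ``no nonzero effective combination of the $C_i$ vanishes in $H_2(\wl X,\R)$'' to ``there is a Cartier divisor $D$ with $D\cdot C_i>0$ for all $i$.'' The separation (Gordan--Stiemke) argument, run against Poincar\'e duality, only produces a class in $H^2(\wl X,\R)$ positive on the $C_i$; to run it against $\Pic(\wl X)\otimes\R$ instead, you must already know that a $1$-cycle supported on $S$ which is nonzero in $H_2(\wl X,\R)$ pairs nontrivially with some line bundle. On a compact manifold that is only Moishezon (and, at this point of the argument, possibly non-K\"ahler) this is not formal: real $(1,1)$-classes need not be approximable by rational ones, so one cannot perturb the separating class into a divisor class, and homological versus numerical triviality of curve classes is precisely the issue. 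Your parenthetical ``compare \cite{ko}'' does not cover this: Theorem 5.3.2 of \cite{ko} assumes a rank-one condition on Weil divisors modulo algebraic equivalence and argues with divisors throughout, rather than deducing a divisor from a homological hypothesis. This bridging step (together with the very-ampleness of $D+nA'$ on what is a priori only a proper algebraic space, which you flag yourself and which is the more routine of the two issues) is the actual content of the theorem; as it stands the proposal assumes it rather than proves it, whereas \cite{AB2} circumvents it entirely by characterizing K\"ahlerness through the nonexistence of certain positive currents supported on the exceptional set.
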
 

The condition on the map $H_2(X,\R)\to H_2(\wl X,\R)$ is in particular
satisfied if $\wl Y_\infty$ is a smooth projective hypersurface.

For small resolutions of threefolds in $\P^4$ we have the following
result. 
\begin{theorem}[{\cite[Theorem 5.3.2]{ko}}]
Let  $\pi\colon \wl X\to \wl Y$ be a small resolution of 
a projective threefold $\wl Y$ with at most terminal hypersurface 
singularities, such that the group of Weil divisors modulo
algebraic equivalence has rank one. 
Then $\wl X$ is a Moishezon threefold which is nonprojective
if $\pi$ is not an isomorphism.
\end{theorem}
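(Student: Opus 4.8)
The strategy is to show that $\wl{X}$ is Moishezon and then rule out projectivity using the hypothesis on the group of Weil divisors. First I would observe that since $\pi\colon\wl X\to\wl Y$ is a small resolution, it is a proper bimeromorphic morphism, and $\wl Y$, being a projective threefold, is Moishezon of dimension three; hence $\wl X$, being bimeromorphic to a Moishezon space and compact, is itself a Moishezon threefold. (If one prefers to stay algebraic: $\wl Y$ is a projective variety and a small resolution need not be algebraic, but it is an algebraic space, which is the content of the Moishezon conclusion.) So the content of the theorem is the nonprojectivity assertion under the assumption that $\pi$ is not an isomorphism.

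For the nonprojectivity, suppose for contradiction that $\wl X$ is projective. Then $\wl X$ carries an ample divisor $L$, and in particular, for any curve $C$ contracted by $\pi$ one has $L\cdot C>0$. The plan is to derive a contradiction from the rank-one hypothesis: the group $N^1(\wl X)$ of Cartier divisors on $\wl X$ modulo algebraic (equivalently, since $\wl X$ is projective, numerical) equivalence is identified, via push-forward and the fact that $\pi$ is small (so has no exceptional divisors), with the group of Weil divisors on $\wl Y$ modulo algebraic equivalence; hence $N^1(\wl X)$ has rank one, generated over $\Q$ by the pullback of an ample divisor $\wl H$ on $\wl Y$. Thus $L\equiv \lambda\,\pi^*\wl H$ for some $\lambda\in\Q_{>0}$. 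But then for a $\pi$-exceptional curve $C$ we get $L\cdot C=\lambda\,(\pi^*\wl H)\cdot C=\lambda\,\wl H\cdot\pi_*C=0$, since $\pi_*C=0$. This contradicts $L\cdot C>0$, provided $\pi$ actually contracts a curve, i.e.\ provided $\pi$ is not an isomorphism. (Here I use that $\wl Y$ has only terminal hypersurface singularities, which are isolated, so the exceptional locus of a nontrivial small resolution is a nonempty union of curves.)

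The one point requiring care — and the main obstacle — is the identification of $N^1(\wl X)_\Q$ with the space of Weil divisors on $\wl Y$ modulo algebraic equivalence, tensored with $\Q$. The map $\pi_*$ sends a Cartier divisor on $\wl X$ to a Weil divisor on $\wl Y$; since $\pi$ has no exceptional divisors, $\pi^*$ (defined on Cartier divisors on $\wl Y$, or on Weil divisors after taking a multiple to clear the non-$\Q$-factoriality of the hypersurface singularities) and $\pi_*$ are inverse to each other up to the curves contracted, and one checks that $\pi_*$ respects algebraic equivalence. The subtlety is that $\wl Y$ need not be $\Q$-factorial, so one works with Weil divisors directly and uses that $\pi^*\pi_*D$ and $D$ differ by a $\pi$-exceptional divisor, which is zero because $\pi$ is small; this gives that $\pi_*\colon N^1(\wl X)\to (\text{Weil divisors}/{\equiv_{\mathrm{alg}}})$ is injective, so the rank-one hypothesis forces $\operatorname{rk} N^1(\wl X)=1$. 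Once this is in place, the numerical contradiction above closes the argument. I would also remark that the Moishezon-but-nonprojective conclusion is exactly what feeds, via the earlier theorem of \cite{AB2} (or \cite{V} in the hypersurface case), into the non-embeddability of the associated $1$-convex manifold.
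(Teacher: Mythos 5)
The paper offers no proof of this theorem: it is quoted from Koll\'ar's survey \cite{ko}, so there is nothing internal to compare your argument with, and I can only judge it on its own terms. On those terms it is essentially the standard argument and is correct in outline: $\wl X$ is compact and bimeromorphic to the projective $\wl Y$, hence Moishezon, and projectivity is excluded by showing that every divisor class on $\wl X$ is numerically proportional to $\pi^*\wl H$, which is trivial on contracted curves. The only substantive soft spot is exactly the step you flag, and there your stated justification is not yet an argument: since the singular points of $\wl Y$ need not be $\Q$-factorial (non-$\Q$-factoriality is precisely what makes a nontrivial small resolution possible), the symbol $\pi^*\pi_*D$ is undefined for a general divisor $D$ on $\wl X$, so one cannot say that ``$\pi^*\pi_*D$ and $D$ differ by a $\pi$-exceptional divisor.'' The clean substitute is to work with strict transforms of families: $\pi_*$ and strict transform are mutually inverse on divisorial cycles because $\pi$ is an isomorphism in codimension one, and if an algebraic equivalence on $\wl Y$ is realised by a family of divisors over a connected curve $T$, then the strict transform of its total space inside $\wl X\times T$ has fibres equal to the strict transforms of the original fibres, since any discrepancy would be a divisorial component supported on $\mathrm{Exc}(\pi)\times T$, which is only two-dimensional. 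This gives that $\pi_*D\equiv_{\mathrm{alg}}0$ on $\wl Y$ forces $D\equiv_{\mathrm{alg}}0$, hence numerically trivial, on $\wl X$; so $N^1(\wl X)$ has rank one and your contradiction $0<L\cdot C=\la\,\wl H\cdot\pi_*C=0$ goes through (note $\la>0$ follows, e.g., from $L^3=\la^3(\pi^*\wl H)^3>0$). The Moishezon part and the existence of a contracted curve when $\pi$ is not an isomorphism are fine as you state them.
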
 

Consider now any terminal hypersurface singularity, which admits a
small resolution. Then there exists a projective hypersurface with
this singularity as only singularity.

\begin{lemma}
Let the polynomial function
$f\colon \C^4\to \C$ define a hypersurface 
with isolated singularity at the origin.  
For generic  homogeneous $g$ of high enough degree the projective
closure $\wl Y\subset \P^4$ of the affine hypersurface $V(f+\ep g)$
has only one singularity, isomorphic to the singularity
of $V(f)$  at the origin, and the hyperplane section at infinity 
$\wl Y_\infty$ is smooth.
\end{lemma}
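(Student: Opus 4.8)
The plan is to write the projective closure concretely and read off the three required properties (isolated singularity at infinity, preservation of the given singularity at the origin, smoothness of the hyperplane section at infinity) as open conditions on $g$. Let $d=\deg(f+\ep g)$, where we will take $\deg g = d$ large; homogenise with a new coordinate $x_0$, so that $\wl Y = V(F)\subset\P^4$ with $F = x_0^d f(x_1/x_0,\dots,x_4/x_0) + \ep\, g(x_1,\dots,x_4)$, where $g$ is already homogeneous of degree $d$. The hyperplane at infinity is $\{x_0=0\}$, and $\wl Y_\infty = \{x_0 = 0,\ g = 0\}\subset\P^3$.

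First I would treat the affine chart $x_0\neq 0$: there $\wl Y$ is exactly $V(f+\ep g)$ as an affine hypersurface, and one checks that for generic $g$ its only singular point in this chart is the origin, with the germ there isomorphic to the germ of $V(f)$ at $0$. Since $f$ has an isolated singularity at $0$, the singular locus of $V(f+\ep g)$ away from $0$ is cut out, for fixed small $\ep$, by $f+\ep g = 0$ together with the vanishing of all partials; a dimension count (or a Bertini-type genericity argument on the very ample linear system of degree-$d$ forms, using that $d\gg0$) shows this locus is empty for generic $g$. That the germ at $0$ is unchanged is immediate: near the origin $g$ is a holomorphic function vanishing to order $\ge$ (something large) less than the multiplicity... more simply, for a \emph{fixed} generic $g$ the family $V(f + t g)$ is a deformation of $V(f)$, and one takes $\ep$ in the range where the singularity at $0$ has not yet been perturbed off; but the cleanest statement is that we may choose coordinates so that $g$ has no effect on the analytic type at $0$ when $\deg g$ exceeds the determinacy order, so $(\wl Y,0)\cong(V(f),0)$.

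Next I would analyse the locus at infinity $\{x_0=0\}$. On $\{x_0=0\}$ the form $F$ restricts to $\ep\, g(x_1,\dots,x_4)$, so $\wl Y\cap\{x_0=0\} = \{x_0=0,\ g=0\} = \wl Y_\infty$. Since $g$ is a generic form of large degree on $\P^3$, Bertini gives that $\wl Y_\infty$ is a smooth surface in $\P^3$. It remains to check $\wl Y$ is smooth along $\wl Y_\infty$ as a threefold in $\P^4$: at a point $q$ with $x_0=0$ and $g(q)=0$, compute $\partial F/\partial x_0$ and $\partial F/\partial x_i$ at $q$. The derivatives $\partial F/\partial x_i$ for $i\ge1$ equal $\ep\,\partial g/\partial x_i(q)$ plus terms divisible by $x_0$, hence equal $\ep\,\partial g/\partial x_i(q)$ at $q$; these do not all vanish because $\wl Y_\infty$ is smooth. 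Hence $\wl Y$ is smooth at every point of $\wl Y_\infty$, so $\Sing(\wl Y)=\Sing(V(f+\ep g))=\{0\}$, which also equals $\Sing(V(f))$ as a germ, giving $\Sing(\wl Y)=\Sing(Y)$ in the sense needed for the embeddability criterion.

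The main obstacle is making the genericity argument for the affine chart fully rigorous and uniform in $\ep$: one wants a single generic $g$ (and then \emph{all} sufficiently small $\ep\neq0$, or at least one such $\ep$) for which no new affine singularities appear and the germ at $0$ is preserved. I would handle this by fixing the degree $N=\deg g$ large enough (larger than the Milnor number or determinacy order of $f$ at $0$, and large enough to make the relevant linear systems very ample), running a Bertini/generic-smoothness argument on the incidence variety $\{(y,g) : y\neq 0,\ f(y)+\ep g(y)=0,\ d(f+\ep g)_y=0\}$ to see its image in $g$-space is a proper subvariety, and noting this is an open dense condition; then shrinking $\ep$ if necessary so that the perturbation has not disturbed the isolated singularity at the origin. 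All the computations at infinity are routine once $\wl Y_\infty$ is known to be smooth, which is again Bertini.
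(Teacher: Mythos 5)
Your proposal is correct and follows essentially the same route as the paper: finite determinacy of $f$ at the origin guarantees the germ of $V(f+\ep g)$ at $0$ is unchanged once $\deg g$ exceeds the determinacy order, while a genericity/Bertini argument removes any other singularities and makes $\wl Y_\infty=V(g)$ smooth (your explicit Jacobian check along $\{x_0=0\}$ just spells out what the paper leaves implicit). Note that your worry about uniformity in $\ep$ is unnecessary: since $g$ is homogeneous of degree at least the determinacy order, $f+\ep g$ has the same relevant jet as $f$ for every $\ep$, so no shrinking of $\ep$ is needed for the germ at the origin.
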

\begin{proof}
As the singularity of $f$ at the origin is finitely determined, the 
hypersurface $V(f+\ep g)$ has an isomorphic singularity at the origin if
the degree of $g$ is at least the degree of determinacy. For generic
$g$ there are no other singularities and  
the hyperplane section at infinity $\wl Y_\infty=V(g)\subset \P^3$
is smooth.
\end{proof}

A polynomial is \textit{very general}, if its parameter point lies
outside a countable union of proper subvarieties in the
space parametrising polynomials of given degree.
\begin{theorem}
Let $f$ and $g$ be as in the lemma above, and let $\wl Y\subset \P^4$
be the projective closure  of $V(f+\ep g)$.
If $g$ is a very general polynomial, then the class group of
$\wl Y$ satisfies $\Cl(\wl Y)\cong \Z$.
\end{theorem}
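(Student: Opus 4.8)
The plan is to exhibit $\Cl(\wl Y)$ as a subgroup of $\Pic(\wl Y_\infty)$ and then invoke Noether--Lefschetz to pin the latter down to $\Z$ for very general $g$. First I would recall the Grothendieck--Lefschetz type theorem for class groups proved in \cite{RS1}: since $\wl Y$ is a normal projective threefold with only isolated singularities, and $\wl Y_\infty$ is an ample Cartier divisor on $\wl Y$ which is smooth (by the lemma) and avoids the singular locus, restriction of Weil divisors induces an injective homomorphism $\Cl(\wl Y)\hookrightarrow \Cl(\wl Y_\infty)=\Pic(\wl Y_\infty)$. One has to check the hypotheses of that theorem: $\wl Y_\infty$ must be sufficiently ample (this is guaranteed by taking $\deg g$ large, as in the lemma) and the local conditions at the singular point are automatic because $\wl Y$ has a hypersurface, hence Cohen--Macaulay and indeed terminal, singularity there, so it is $\Q$-factorial in codimension issues are handled and the depth conditions of \cite{RS1} hold.

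Next I would apply the classical Noether--Lefschetz theorem. The surface $\wl Y_\infty = V(g)\subset \P^3$ is a smooth projective surface of degree $d=\deg g$; provided $d\ge 4$, the Noether--Lefschetz locus in the space of degree-$d$ forms is a countable union of proper subvarieties, and for $g$ outside it one has $\Pic(\wl Y_\infty)\cong \Z$, generated by the hyperplane class. Here is where the hypothesis that $g$ be \emph{very general} is used exactly as defined just before the statement: we discard this countable union together with the (finitely many) subvarieties from the lemma where the singularity degenerates or $\wl Y_\infty$ fails to be smooth. Thus for very general $g$ of high degree, $\Cl(\wl Y)$ injects into $\Z$, so it is either $0$ or $\Z$.

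Finally I would rule out $\Cl(\wl Y)=0$. The hypersurface $V(f)$ has by hypothesis a singular point admitting a small resolution $\pi\colon (X,S)\to(V(f),0)$ which is not an isomorphism; pulling this back to a small resolution $\wl X\to \wl Y$ (possible since the singularity of $\wl Y$ is analytically isomorphic to that of $V(f)$), the exceptional curve gives a nontrivial small contraction, and the image in $\wl Y$ of any exceptional divisor of, say, the blow-up of the ideal sheaf of a curve through the singular point is a Weil divisor that is not Cartier — equivalently, $\wl Y$ is not $\Q$-factorial at $p$ precisely because it admits a nontrivial small resolution. Concretely, a generic hyperplane through $p$ pulls back under $\pi$ to a divisor $G$ whose components other than the strict transform of the hyperplane are exceptional, and pushing forward a suitable such component yields a non-Cartier Weil divisor class; since $\Cl(\wl Y)$ injects into a torsion-free group it must then be all of $\Z$. (Alternatively one can cite that a terminal Gorenstein threefold singularity with a small resolution is never $\Q$-factorial, so $\Cl$ of a projective model is strictly larger than $\Pic$, forcing $\Cl(\wl Y)\ne 0$.)

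The main obstacle I expect is the precise verification of the Grothendieck--Lefschetz hypotheses of \cite{RS1} in the presence of the singular point of $\wl Y$: one needs that the ambient threefold and its hyperplane section are related so that restriction on class groups is injective even though $\wl Y$ is singular. This is exactly the content for which \cite{RS1} is cited, so in the write-up it should reduce to quoting that theorem after noting $\wl Y_\infty$ is ample, smooth, and disjoint from $\Sing(\wl Y)$; the Noether--Lefschetz step and the non-vanishing step are then comparatively routine.
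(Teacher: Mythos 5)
Your first two steps are essentially the paper's proof: the restriction map $\Cl(\wl Y)\to\Cl(\wl Y_\infty)=\Pic(\wl Y_\infty)$ is injective by the Grothendieck--Lefschetz theorem of Ravindra--Srinivas \cite{RS1}, and the classical Noether--Lefschetz theorem gives $\Pic(\wl Y_\infty)\cong\Z$ for very general $g$ of degree at least $4$. (On the point you wave at --- ``check the hypotheses of \cite{RS1}'' --- the paper is more precise: the theorem there is stated only for hyperplane sections in a dense open subset, and one invokes the remark of the same authors in \cite[p.~3378]{RS2} that smoothness of $\wl Y_\infty$ together with $\wl Y_\infty\cap\Sing(\wl Y)=\emptyset$ suffices.)

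Your third step, however, contains a genuine error. You argue $\Cl(\wl Y)\neq 0$ by claiming that the existence of a nontrivial small resolution forces $\wl Y$ to be non-$\Q$-factorial at $p$, so that $\Cl(\wl Y)\supsetneq\Pic(\wl Y)$. This is false, and it contradicts the very theorem you are proving: the conclusion $\Cl(\wl Y)\cong\Z$, generated by the Cartier hyperplane class, says precisely that $\wl Y$ is factorial. A small resolution existing in the analytic category only shows that the \emph{analytic local} ring at $p$ is not factorial; the local Weil divisors witnessing this need not come from global algebraic divisors on $\wl Y$, so the global class group can still coincide with $\Pic$. Indeed this is exactly the phenomenon the paper exploits: by Koll\'ar's theorem quoted there, it is the rank-one condition on Weil divisors (i.e.\ $\Q$-factoriality of $\wl Y$) that forces the small resolution $\wl X$ to be non-projective --- the classical situation of a general quintic threefold with one node, which is factorial even though its node admits analytic small resolutions. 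Your concrete construction also fails: $\pi$ is a small morphism, so it has no exceptional divisors, and the pullback $\pi^*H$ of a generic hyperplane section is the irreducible normal surface $G$ of Reid's proposition quoted in the paper; there are no ``components other than the strict transform'' to push forward. Fortunately the fact you actually need is trivial: the hyperplane class of $\wl Y$ is a nonzero element of $\Cl(\wl Y)$ restricting to the generator of $\Pic(\wl Y_\infty)$, so the injection is an isomorphism onto $\Z$. With that replacement your argument coincides with the paper's.
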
 
\begin{proof}
By the Grothendieck--Lefschetz theorem of Ravindra and Srinivas
\cite{RS1} the restriction homomorphism 
$\Cl(\wl Y) \to \Cl(\wl Y_ \infty)$ is injective. The theorem as stated there
gives only the conclusion for hyperplane sections in a Zariski dense open
subset of sections, but as remarked by the same Authors 
in \cite[p. 3378]{RS2},
it suffices that $\wl Y_ \infty$ is smooth and does not pass through
the singularity of $\wl Y$. 
By the classical Noether--Lefschetz theorem a very general smooth
surface $S$ of degree at least $4$ in 
$\P^3$ satisfies $\Cl(S)=\Pic(S)\cong \Z$. An algebraic 
proof can be found in 
\cite{RS2}. 
\end{proof} 

Combining the above results we obtain that every small contraction 
to a hypersurface singularity can occur on a non-embeddable 
$1$-convex manifold.
\begin{cor}\label{result}
Suppose that the affine threefold $V(f)\subset \C^4$  has a terminal 
hypersurface singularity at the origin, which admits a 
non-trivial small resolution $X_0$. 
For very general   homogeneous $g$ of high enough degree 
the corresponding small resolution $X $  
of the affine hypersurface $V(f+\ep g)$
is a non-embeddable $1$-convex manifold.
\end{cor}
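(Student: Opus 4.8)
The plan is to assemble Corollary \ref{result} from the preceding results, the only real work being to verify that the hypotheses of each cited theorem are met. First I would invoke the Lemma: since $V(f)$ has an isolated (hence finitely determined) singularity at the origin, for generic homogeneous $g$ of degree at least the degree of determinacy, the projective closure $\wl Y\subset\P^4$ of $V(f+\ep g)$ has exactly one singular point, analytically isomorphic to the singularity of $V(f)$ at $0$, and the hyperplane section at infinity $\wl Y_\infty=V(g)\subset\P^3$ is smooth. In particular $\Sing(\wl Y)=\Sing(V(f+\ep g))=\{0\}$, so the affine threefold $Y:=V(f+\ep g)$ and its projective closure $\wl Y$ have the same singular locus.

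Next I would transport the small resolution. The small resolution $X_0$ of the germ of $V(f)$ at $0$ exists by hypothesis; since $\wl Y$ has a singularity analytically isomorphic to this germ and no other singularities, the same local modification extends to a small resolution $\wl\pi\colon\wl X\to\wl Y$, and restricting over the affine chart gives a small resolution $\pi\colon X\to Y$ of the $1$-convex manifold $X$. The divisor at infinity $\wl X_\infty=\wl Y_\infty$ is the smooth hypersurface $V(g)$, untouched by the modification. I would then apply the Theorem of \cite{RS1}, \cite{RS2} together with Noether--Lefschetz: taking $g$ very general (so that $V(g)\subset\P^3$, of degree at least $4$ once the degree of determinacy forces this, has $\Pic\cong\Z$), the Theorem above gives $\Cl(\wl Y)\cong\Z$, so in particular the group of Weil divisors on $\wl Y$ modulo algebraic equivalence has rank one.

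With rank one established, Theorem \cite[5.3.2]{ko} applies to $\wl\pi\colon\wl X\to\wl Y$: since $\wl Y$ is a projective threefold with only terminal hypersurface singularities and $\wl\pi$ is a nontrivial small resolution, $\wl X$ is Moishezon and nonprojective. Finally I would invoke Theorem \cite{AB2} (or its hypersurface case \cite{V}): here $Y$ is Stein and quasi-projective of dimension $3$, $\Sing(\wl Y)=\Sing(Y)$, and since $\wl Y_\infty$ is a smooth projective hypersurface the map $H_2(X,\R)\to H_2(\wl X,\R)$ is injective (as noted just after that theorem). Hence $X$ is embeddable if and only if $\wl X$ is projective; since $\wl X$ is not projective, $X$ is not embeddable, completing the proof.

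The only genuine subtlety — and the place where one must be slightly careful rather than merely chaining citations — is ensuring the degree of $g$ can be chosen simultaneously large enough for three independent purposes: at least the degree of determinacy of the singularity (so that the analytic type is preserved and a compatible small resolution exists), at least $4$ so that Noether--Lefschetz applies to $V(g)\subset\P^3$, and generic/very-general within that degree so that both the Lemma's genericity (smoothness at infinity, no extra singularities) and the Noether--Lefschetz very-generality hold. Since "very general'' only excludes a countable union of proper subvarieties and "generic'' a Zariski-closed one, intersecting finitely many such conditions still leaves a very general $g$ of the chosen large degree, so no conflict arises.
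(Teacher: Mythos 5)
Your proposal is correct and follows essentially the same route as the paper, which proves the corollary precisely by combining the Lemma, the class-group theorem ($\Cl(\wl Y)\cong\Z$ via Ravindra--Srinivas and Noether--Lefschetz), Koll\'ar's nonprojectivity criterion, and the Alessandrini--Bassanelli/V\^aj\^aitu comparison theorem. Your added remarks on gluing the local small resolution into $\wl Y$ and on choosing the degree of $g$ compatibly (determinacy, degree at least $4$, very generality) are exactly the implicit verifications the paper leaves to the reader.
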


The above Corollary is a  statement about affine 3-folds. A direct proof,
without going to the projective closure, would be preferable.
We have not been able to find it. The problem is that
there exist affine hypersurfaces with terminal singularities,
whose small resolution is embeddable: typically 
this is the case for the hypersurface
$V(f)$, whose small resolution is given by explicit polynomial formulas.
Adding the form $g$ means specifying the hyperplane section
at infinity, so one is naturally led to  the projective closure. 

\section{Small resolutions for $cD_4 $-singularities}
In this section we construct a small resolution
with irreducible exceptional set for certain 
$cD_4$-singularities.
We view it as total space of a 1-parameter smoothing of a 
partial resolution of a $D_4$ surface singularity.
As such it can be obtained by pull-back from the 
versal deformation of the partial resolution.
We first describe  Pinkham's construction 
of this versal deformation \cite{pi},
see also \cite{KM}. We give explicit formulas.
We then classify the occurring singularities.

The versal deformation $\cY \to S$  of a surface singularity $Y$
of type $A$, $D$, $E$ 
admits a simultaneous resolution after base change with the
corresponding Weyl group $W$.
We write $S=T/W$ and identify
$T$ with the vector space spanned by a root system of 
type $A$, $D$ or $E$. 
The simultaneous resolution is 
the versal deformation $\wt \cX \to T$ of the 
minimal resolution $\wt X$ of $Y$.

Now consider a partial resolution $\wh X \to Y$
with irreducible exceptional set $E_0$; 
we denote  strict transform of $E_0$ on the minimal
resolution by the same name. 
It determines a one vertex subgraph $\Gamma_0$
of the resolution graph 
$\Gamma$. The connected components of the complement 
$\Gamma\setminus\Gamma_0$ are the graphs of the singularities
on the partial resolution $\wh X$; we can construct
$\wh X$
from the minimal resolution by blowing down the configurations
of curves, given by $\Gamma\setminus\Gamma_0$. 
The versal
deformation $\wh\cX$ of $\wh X$ 
admits a simultaneous resolution after base 
change with the product $W_0$ of the Weyl groups corresponding to
the connected components of  
$\Gamma\setminus\Gamma_0$; this simultaneous
resolution is nothing else than $\wt\cX\to T$. 
So the base space
of $\wh\cX$ is $T/W_0$.

In the cases $A$ and $D$ it is rather easy to give the
simultaneous partial resolution explicitly, but for $E$ (and
especially $E_8$) the formulas become too complicated (cf.~\cite{KM}).
We restrict ourselves in the following to the simplest case
leading to normal bundle $(1,-3)$, the case $D_4$.
Then the length of the small contraction (Definition 
\ref{lengthdef}) is equal to two.

We start from the versal deformation $\cX \to S=T/W$ 
of $D_4$, 
as in  \cite{tj}, see also \cite{KM}, given by
\begin{equation}\label{versald4}
  x^2+y^2z-z^3-t_2z^2-t_4z-t_6+2s_4y=0\;.
\end{equation}
In $T\cong \C^4$, with negative definite inner product 
$\lbrace e_i, e_j \rbrace = -\delta_{ij}$,
 there is a root system with basis 
\begin{alignat*}{2}
v_1 &= e_1 - e_2 &&= (1,-1,0,0)\\
v_2 &= e_2 - e_3 &&= (0,1,-1,0)\\
v_3 &= e_3 - e_4 &&= (0,0,1,-1)\\
v_4 &= e_3 + e_4 &&= (0,0,1,1)
\end{alignat*}
and Dynkin diagram
\[
\unitlength 30pt
\begin{picture}(2,1)
\put(0,0){\makebox(0,0){$v_1$}}
\put(0.2,0){\line(1,0){0.6}}
\put(1,0){\makebox(0,0){$v_2$}}
\put(1.2,0){\line(1,0){0.6}}
\put(2,0){\makebox(0,0){$v_3$}}
\put(1,0.2){\line(0,1){0.6}}
\put(1,1){\makebox(0,0){$v_4$}}
\end{picture}
\]
The roots $v_i$ correspond to the components of the exceptional divisor
of the resolution of $D_4$. 
The partial resolution $\wt X$, which only pulls out the 
central curve, is obtained by blowing down the components corresponding
to $v_1$, $v_3$ and $v_4$. 
The graph $\Gamma\setminus\Gamma_0$ has three components,
all three of type $A_1$. 
Its Weyl group is $W_0=W(A_1)\times W(A_1)\times W(A_1)$.
To describe it explicitly, we take coordinates  
$(\al_1,\al_2,\al_3,\al_4)$ on $T$.
The reflection  $s_{v_4}$ acts as 
$(\al_1,\al_2,\al_3,\al_4)
\mapsto(\al_1,\al_2,-\al_4,-\al_3)$, whereas
for $i=1,2,3$ the $s_{v_i}$ are the transpositions $(i,i+1)$.
The connection with the deformation \eqref{versald4}
is that the coordinates on $S=T/W$ are the invariants
$t_{2i}=\sigma_i(\al_1^2,\al_2^2,\al_3^2,\al_4^2)$  for $i=1,2,3$ and 
$s_4=\sigma_4(\al_1,\al_2,\al_3,\al_4)$, where the $\sigma_i$ are the 
elementary symmetric functions.

\begin{prop}
The versal   deformation $\wh \cX \to T/W_0$ of  the
partial resolution 
$\wh X$ is a simultaneous partial resolution (without base 
change) of the deformation
\begin{multline}\label{versal}
F(x,y,z;\be_1,\be_2,\ga_3,\be_4)={}\\
  x^2-(z^2+z\be_1+\be_2^2)\ga_3^2
 +z(y+\be_2)^2-2\be_2(y+\be_2)(z-\be_4)-(z+\be_1)(z-\be_4)^2
\end{multline}
of the  $D_4$ surface singularity $Y$.
\end{prop}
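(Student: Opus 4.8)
The plan is to realise $\wh\cX\to T/W_0$ explicitly as a family of partial resolutions and then verify that blowing down exactly the three $A_1$-configurations recovers the deformation \eqref{versal}. First I would set up coordinates: the quotient $T/W_0$ has coordinates that are $W_0$-invariant polynomials in the $\al_i$. Since $W_0=W(A_1)^3$ acts by sign changes on $\al_1,\al_2$ and $\al_4$ (after the reflection $s_{v_4}$ is rewritten in the $e_i$-basis) while fixing — no, more carefully, the three $A_1$'s correspond to $v_1$, $v_3$, $v_4$, so $W_0$ is generated by the transposition $(1,2)$, the transposition $(3,4)$, and the map $(\al_3,\al_4)\mapsto(-\al_4,-\al_3)$. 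A convenient invariant coordinate system is $\be_1=-(\al_1+\al_2)$ or rather the pair of $(1,2)$-symmetric functions $\al_1+\al_2$, $\al_1\al_2$, together with the functions of $\al_3,\al_4$ invariant under both $(3,4)$ and $(\al_3,\al_4)\mapsto(-\al_4,-\al_3)$, namely $\al_3^2+\al_4^2$ and $\al_3\al_4$ (note $\al_3+\al_4\mapsto -(\al_3+\al_4)$ and $\al_3\al_4\mapsto\al_3\al_4$, while $\al_3^2+\al_4^2$ is fixed). So one obtains four invariants matching $\be_1,\be_2,\ga_3,\be_4$; I would pin down the exact dictionary by comparing with the classical elementary symmetric functions $t_{2i}$ and $s_4$ on $S=T/W$, using that $T/W_0\to T/W$ is the quotient by the residual group $W/W_0$.

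Next I would exhibit the simultaneous resolution $\wt\cX\to T$ of $D_4$ — this is standard (e.g.\ Brieskorn–Tjurina, or the explicit charts in \cite{KM}) — and on it identify the three families of $(-2)$-curves indexed by the walls $v_1=0$, $v_3=0$, $v_4=0$. Blowing these down over all of $T$ gives a space $\wh\cX$ fibred over $T$, which is constant along the $W_0$-orbits precisely because contracting the $v_i$-curves is insensitive to the reflections $s_{v_i}$; hence $\wh\cX$ descends to a family over $T/W_0$. That descended family is a deformation of the partial resolution $\wh X$ of $Y$ whose only remaining singularities are three $A_1$'s lying over the three coordinate hyperplanes; by Pinkham's construction \cite{pi} (see also \cite{KM}) it is \emph{the} versal deformation of $\wh X$, with base $T/W_0$ exactly as recalled in the text. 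The image of $\wh\cX$ under the contraction to the deformation of $Y$ — that is, the pushforward family of affine surfaces — is then a deformation of $Y$ over $T/W_0$, and what remains is to check that it is given by the formula \eqref{versal}.

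To carry out that last check I would work in the chart of $\wt\cX$ containing the central curve and push the equation \eqref{versald4} through the blow-downs, or — more efficiently — argue as follows. The pullback family $\cY\times_S (T/W_0)\to T/W_0$ is a deformation of $Y$; it admits the simultaneous \emph{partial} resolution $\wh\cX$ by construction, and conversely any deformation of $Y$ admitting a simultaneous partial resolution of this combinatorial type is induced from $\cY\to S$ via a map $T/W_0\to S$ factoring the quotient. So the family over $T/W_0$ is unique up to isomorphism, and it suffices to verify that the right-hand side of \eqref{versal}, regarded as a family over the $(\be_1,\be_2,\ga_3,\be_4)$-space, (i) specialises to the $D_4$ equation at the origin, (ii) has the correct Kodaira–Spencer map, i.e.\ induces the quotient $T/W_0\to T/W=S$ under the identification of coordinates found in step one, and (iii) pulls back from \eqref{versald4} — the cleanest way being to substitute the invariants $t_{2i},s_4$ expressed in $\be_1,\be_2,\ga_3,\be_4$ into \eqref{versald4} and transform by an explicit coordinate change in $x,y,z$ to reach \eqref{versal}. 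I expect the genuine obstacle to be precisely this bookkeeping in (iii): finding the coordinate change in $(x,y,z)$ and the precise polynomial expressions of $t_2,t_4,t_6,s_4$ in $\be_1,\be_2,\ga_3,\be_4$ that make \eqref{versald4} and \eqref{versal} literally agree, and then confirming that the resulting map $T/W_0\to S$ is the natural quotient rather than some unwanted finite cover. Everything else — the descent of $\wh\cX$ to $T/W_0$, the versality, the identification of the blown-down configurations — is formal once Pinkham's picture is in place.
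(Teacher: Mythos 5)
Your proposal is correct and follows essentially the same route as the paper: the paper's proof consists precisely of your step one (the $W_0$-invariants $\ga_3=\al_1+\al_2$, $\be_4=\al_1\al_2$, $\be_1=\al_3^2+\al_4^2$, $\be_2=\al_3\al_4$, and the expressions of $t_2,t_4,t_6,s_4$ in them), your step (iii) of substituting these into \eqref{versald4}, and the appeal to Pinkham for the statement that the simultaneous partial resolution over $T/W_0$ is the versal deformation of $\wh X$. The only remark worth adding is that the "bookkeeping" you anticipate is lighter than you fear: no coordinate change in $(x,y,z)$ is needed, as the substitution followed by regrouping of terms already yields \eqref{versal} literally, and your worry about an unwanted finite cover is dispelled because the four invariants generate the full $W_0$-invariant ring, so they are genuine coordinates on $T/W_0$.
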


\begin{proof}
The invariants for $W_0$ are
\begin{align*}
\ga_3&=\al_1+\al_2\;,\\
\be_1&=\al_3^2+\al_4^2\;,\\
\be_2&=\al_3\al_4\;,\\
\be_4&=\al_1\al_2\;.
\end{align*}
We express the coordinates on $S$ in these invariants:
\begin{align*}
t_2&=\be_1+\ga_3^2-2\be_4\;,\\
t_4&=\be_2^2+\be_4^2+\be_1(\ga_3^2-2\be_4)\;,\\
t_6&=\be_1\be_4^2+\be_2^2(\ga_3^2-2\be_4)\;,\\
s_4&=\be_2\be_4\;.
\end{align*}
%
Inserting these values in the versal family 
\eqref{versald4}
and rearranging gives the formula \eqref{versal}.
According to Pinkham \cite{pi} 
a simultaneous partial resolution gives the desired versal
deformation.
\end{proof}

\begin{lemma}
The (reduced) discriminant of the family \eqref{versal}
has five irreducible components, given by 
$4\be_4=\ga_3^2$, $\be_2=2\be_1$, $\be_2=-2\be_1$, $\ga_3=0$
and
\begin{equation}\label{discomp}
(\be_4^2+\be_1\be_4+\be_2^2)^2-\ga_3^2(\be_1\be_2^2+4\be_2^2\be_4+\be_1\be_4^2)
+\be_2^2\ga_3^4=0\;.
\end{equation}
\end{lemma}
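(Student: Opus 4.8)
The plan is to compute the discriminant of the family \eqref{versal} directly, exploiting the simultaneous partial resolution structure from the previous Proposition. Since $\wh\cX\to T/W_0$ is a simultaneous partial resolution of \eqref{versal}, the discriminant locus in $T/W_0$ is the image under $T\to T/W_0$ of the non-transversality locus upstairs, which in the coordinates $(\al_1,\al_2,\al_3,\al_4)$ on $T$ is the union of the root hyperplanes together with the locus where the central curve degenerates. Concretely, the fibre over a point of $T$ acquires a worse-than-nodal singularity (or the partial resolution fails to stay a partial resolution) exactly when two of the $\al_i$ that were separated become equal, or when a curve that was blown down in forming $\wh X$ meets the central curve. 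I would first enumerate these hyperplanes: the reflections in $W_0$ are generated by $\al_1\leftrightarrow\al_2$, by $\al_3\leftrightarrow\al_4$, and by $(\al_3,\al_4)\mapsto(-\al_4,-\al_3)$; these give $\al_1=\al_2$, $\al_3=\al_4$, $\al_3=-\al_4$. The remaining roots of $D_4$ not in $W_0$, namely $\al_1\pm\al_3$, $\al_1\pm\al_4$, $\al_2\pm\al_3$, $\al_2\pm\al_4$, are permuted by $W_0$ and their product is a $W_0$-invariant whose vanishing should cut out \eqref{discomp}.

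So the key steps, in order, are as follows. First, translate each candidate hyperplane into the invariant coordinates $\ga_3=\al_1+\al_2$, $\be_1=\al_3^2+\al_4^2$, $\be_2=\al_3\al_4$, $\be_4=\al_1\al_2$. The locus $\al_1=\al_2$ becomes $\ga_3^2-4\be_4=0$; the pair $\{\al_3=\al_4,\ \al_3=-\al_4\}$ together give $(\al_3-\al_4)(\al_3+\al_4)=\al_3^2-\al_4^2$, but since $\be_1,\be_2$ are symmetric under $\al_3\leftrightarrow\al_4$ only, the individual hyperplanes are not $W_0$-invariant; rather $\al_3=\al_4$ maps to $\be_1=2\be_2$ and $\al_3=-\al_4$ maps to $\be_1=-2\be_2$ (using $\al_3^2+\al_4^2\mp2\al_3\al_4=(\al_3\mp\al_4)^2$), accounting for the components $\be_2=2\be_1$ and $\be_2=-2\be_1$ in the statement (note the roles of $\be_1,\be_2$ are as written there). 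The component $\ga_3=0$ I expect comes not from a root hyperplane but from the locus where the central exceptional curve of the partial resolution itself degenerates — this is the analogue of the extra discriminant component in Reid's pagoda analysis, corresponding to $s_4$-type behaviour; I would verify directly from \eqref{versal} by setting $\ga_3=0$ and checking the fibre is non-normal or more singular. Second, compute $\prod(\al_i\pm\al_j)$ over the eight roots outside $W_0$ and rewrite it in $(\ga_3,\be_1,\be_2,\be_4)$; since this product is a polynomial symmetric under all of $W_0$, it must be expressible in the four invariants, and a degree count ($8$ in the $\al$, i.e.\ degree matching the left side of \eqref{discomp} under the weights $\deg\ga_3=1$, $\deg\be_1=\deg\be_2=\deg\be_4=2$) pins it down up to the lower-order structure, yielding \eqref{discomp}. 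Third, assemble: the reduced discriminant is the product of these five irreducible factors, and I would check irreducibility of \eqref{discomp} (e.g.\ it is irreducible as a polynomial in $\ga_3^2$ with the remaining variables generic, or by noting it is the norm of an irreducible element).

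An alternative, more computational route is to bypass the root-system bookkeeping and simply compute the discriminant of the affine surface $F=0$ in \eqref{versal} as a hypersurface singularity: the fibre over $(\be_1,\be_2,\ga_3,\be_4)$ is singular where $F=F_x=F_y=F_z=0$ has a solution. Eliminating $x$ (so $x=0$, $F_x=2x=0$ automatically) reduces to the plane-curve-in-$(y,z)$ condition $G=G_y=G_z=0$ where $G$ is the $y,z$-part of $F$; the resultant/discriminant of this system in $y,z$ is a polynomial in the four parameters, and factoring it should reproduce the five components. This is a finite Gröbner-basis or resultant computation; I would use it as an independent check rather than the primary argument, since the factorisation is the delicate part and the root-theoretic picture explains \emph{why} the factors take the stated form.

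The main obstacle I anticipate is step two: correctly computing $\prod_{\text{roots}\notin W_0}(\al_i\pm\al_j)$ and expressing it cleanly in the invariants to land exactly on \eqref{discomp}, including getting the cross-terms $-\ga_3^2(\be_1\be_2^2+4\be_2^2\be_4+\be_1\be_4^2)+\be_2^2\ga_3^4$ right. The symmetry argument guarantees such an expression exists, but matching coefficients requires care — one convenient shortcut is to specialise: set $\al_1=\al_2$ (so $\ga_3=2\al_1$, $\be_4=\al_1^2$) and separately $\al_3=0$ (so $\be_1=\al_4^2$, $\be_2=0$), reducing \eqref{discomp} to a one-variable identity one can read off, then use enough such specialisations to fix all coefficients. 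The reducedness claim is then immediate since each listed factor is reduced and they are pairwise coprime (distinct irreducible polynomials), so the only real work is the identification of \eqref{discomp} and the check that $\ga_3=0$ genuinely appears — the latter being the point where the partial-resolution (rather than full-resolution) nature of the family matters.
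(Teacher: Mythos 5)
Your identification of the discriminant with the image of the reflection hyperplanes is the right idea (and is exactly the paper's starting point), but your root-system bookkeeping has a genuine error: the $D_4$ roots outside the sub-system spanned by $v_1,v_3,v_4$ are \emph{nine} hyperplanes up to sign, not eight — you omit the highest root $e_1+e_2$, i.e.\ the hyperplane $\al_1+\al_2=0$, which is precisely the $W_0$-invariant hyperplane mapping to $\ga_3=0$. Consequently your claim that the component $\ga_3=0$ ``comes not from a root hyperplane but from the locus where the central exceptional curve degenerates'' is wrong: the discriminant of \eqref{versal} is the discriminant of a family of \emph{surfaces} pulled back from the versal deformation over $T/W$, hence is exactly the image of the twelve root hyperplanes and nothing more; the partial resolution plays no role here. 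The paper makes this point by noting that $e_1+e_2=v_1+2v_2+v_3+v_4$ is the fundamental cycle and that over generic $\ga_3=0$ the fibre has a singular point at $x=y+\be_2=z-\be_4=0$. As you have set it up, the completeness of your list of five components is not established: your proposed upper bound (``root hyperplanes plus a central-curve degeneration locus'') leaves the second piece unspecified, and your direct check that $\ga_3=0$ lies in the discriminant only gives containment one way. The fix is small — restore the missing root and note its image is $\ga_3=0$ — but it is needed.

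The rest of the proposal is sound and in part corrects the paper. Your computation of the images of $\al_3=\pm\al_4$ as $\be_1=\pm2\be_2$ is correct; the lemma's ``$\be_2=\pm2\be_1$'' is evidently a typo (it is inconsistent with $\be_1=\al_3^2+\al_4^2$, $\be_2=\al_3\al_4$, and with the later nonsingularity condition $b_1\neq\pm2b_2$), so do not wave this away with ``the roles are as written there'' — state the correct equations. For the fifth component, your route differs from the paper's: you take the product $\prod_{i\in\{1,2\},\,j\in\{3,4\}}(\al_i^2-\al_j^2)$, which is $W_0$-invariant, and express it in $(\ga_3,\be_1,\be_2,\be_4)$; writing it as $(c^2-sc+p)$ evaluated at $c=\al_3^2,\al_4^2$ with $s=\ga_3^2-2\be_4$, $p=\be_4^2$ and expanding does indeed give exactly the left-hand side of \eqref{discomp}, so this step works and is arguably cleaner than the paper's method, which instead parametrises the relevant component of the critical locus over the single hyperplane $\al_1+\al_3=0$ and eliminates $y,z$. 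Irreducibility of \eqref{discomp} also follows from the transitivity of $W_0$ on those eight hyperplanes, as you indicate.
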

\begin{proof}
The discriminant is the image of the reflection
hyperplanes  $\al_i\pm\al_j=0$ in $T$.
The hyperplanes perpendicular to $v_1$, $v_3$ and $v_4$ are $\al_1=\al_2$,
$\al_3=\al_4$ and $\al_3=-\al_4$; they map to $4\be_4=\ga_3^2$, $\be_2=2\be_1$
and $\be_2=-2\be_1$. The fundamental cycle of the singularity corresponds
to $v_1+2v_2+v_3+v_4=(1,1,0,0)$ and determines the hyperplane
$\al_1+\al_2=\ga_3=0$. For $\ga_3=0$ there is a singular point at 
$x=y+\be_2=z-\be_4=0$.

The remaining hyperplanes give rise to the same 
irreducible component of the discriminant.
To describe it we determine the corresponding component of the critical locus,
which is the component of the singular locus of the total space, not contained in
$\ga_3=0$. A computation, which we suppress,
shows that it  is given by $x=0$ and
\[
\text{Rank}
\begin{pmatrix}
z & \be_2 & z-\be_4  \\
\be_2 & -z-\be_1 & y+\be_2  \\
z-\be_4 & y+\be_2 & -\ga_3^2
\end{pmatrix}
\leq 1\;.
\]
This is indeed image of $\al_1+\al_3=0$:
we have $\ga_3=\al_1+\al_2$,
$\be_1=\al_1^2+\al_4^2$, $\be_2=-\al_1\al_4$ 
and $\be_4=\al_1\al_2$, while the singular point lies
at $x=0$, 
$y=-\al_2\al_4$ and $z=-\al_1^2$.
By  eliminating the variables $y$ and $z$
we find the equation \eqref{discomp} for this component.
\end{proof}

To explicitly construct  the simultaneous
partial resolution we proceed as in  
Example (5.15) of \cite{re}.
We write the family \eqref{versal} as
\begin{equation}\label{pagoda}
F=X^2+(ac-b^2)T^2+aY^2-2bYZ+cZ^2\;.
\end{equation}
This generalises the family in \cite{re}, where $b=0$.
The coordinate change is given by $X=x$, $T=\ga_3$, $Y=y+\be_2$, $Z=z-\be_4$,
$a=z$, $b=\be_2$ and $c=-z-\be_1$.
We consider the family \eqref{pagoda}
as quadric in $X$, $Y$, $Z$ and $T$, with coefficients
in $\C[a,b,c]$. It has two small resolutions. 
To give them explicitly we have to
factorize $aY^2-2bYZ+cZ^2$. The idea is to put $a=-\al^2$ and write
$X^2-(\al Y+\frac b{\al}Z)^2+(ac-b^2)(T^2-(\frac Z{\al})^2)=0$. Then one small
resolution is obtained by blowing up the ideal $(X-(\al Y+\frac b{\al}Z),T-\frac Z{\al})$.
We could as well set $c=-\ga^2$; it leads to the same two small resolutions.

We eliminate $\al$ from the generators of the ideal by writing them as
$(-1,\al)M$, where $M$ is the matrix
\begin{equation}
M=
\begin{pmatrix}
Z  &  -aT  & X-bT  & -aY \\
T  &    Z   &   Y   &  X-bT
\end{pmatrix}\;.
\end{equation}
\begin{lemma}
The blow-up of the ideal generated by the minors of the $2\times4$ matrix
$M$  defines a small resolution $\wh \cX$ of the total space
of the family \eqref{pagoda}.
\end{lemma}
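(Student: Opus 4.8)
The plan is to verify that blowing up the ideal $I$ of $2\times 2$ minors of $M$ gives a variety $\wh{\cX}$ which is smooth, maps properly and birationally onto the total space $\cX$ of \eqref{pagoda}, and does so without dropping the dimension of any fiber by more than is allowed for a small resolution (i.e.\ the exceptional fiber is at most one-dimensional over the locus where $\cX$ is singular). First I would record the six minors of $M$ explicitly and observe that the locus they cut out on $\cX$ is precisely the critical locus described in the previous lemma (the rank-one locus of the $3\times 3$ symmetric matrix), together with whatever sits over $\ga_3 = T = 0$; away from this locus the blow-up is an isomorphism. The key structural input is that $M$ has a very specific shape: its two rows are $(-1,\al)M$-style generators of the ideal $(X-(\al Y + \tfrac b\al Z),\, T - \tfrac Z\al)$ after clearing $\al$, so on the chart $a = -\al^2 \ne 0$ the blow-up is manifestly the small resolution obtained by blowing up that principal-ish ideal, and symmetrically on $c = -\ga^2 \ne 0$.

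The main technical step is the smoothness check. I would work chart by chart on the blow-up $\mathrm{Bl}_I(\cX) \subset \cX \times \P^1$ (the target is $\P^1$ because, although $I$ is generated by the $2\times 2$ minors, the two rows of $M$ are two sections and the Rees algebra is generated in the expected way — this should be checked, but it is the point of writing the generators as $(-1,\al)M$). On the chart where the $\P^1$-coordinate is finite, call it $\la$, the equations become: the entries of the first row equal $\la$ times the entries of the second row, i.e.\ $Z = \la T$, $-aT = \la Z$, $X - bT = \la Y$, $-aY = \la(X-bT)$. From $Z = \la T$ and $-aT = \la Z = \la^2 T$ we get $(a+\la^2)T = 0$; similarly from the other pair $(a+\la^2)Y = 0$ after substituting. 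So on this chart $\wh{\cX}$ is cut out inside $\C^3_{a,b,c}\times \C^4_{X,Y,Z,T}\times \C_\la$ by $Z = \la T$, $X - bT = \la Y$, $a = -\la^2$ (away from $T = Y = 0$), which together with the original equation $F = 0$ — and here one substitutes and checks $F$ becomes identically zero or a consequence — exhibits $\wh{\cX}$ locally as a smooth complete intersection. The honest work is handling the sublocus $T = Y = 0$ where $a = -\la^2$ need not follow; there one uses the \emph{other} pair of generators or passes to the chart at $\la = \infty$, and the symmetry $a \leftrightarrow c$, $Y \leftrightarrow Z$, $T$ fixed of the whole configuration reduces this to the already-treated case. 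I expect the Jacobian criterion to go through cleanly once the equations are in this reduced form.

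Finally I would assemble the pieces: properness of $\wh{\cX}\to\cX$ is automatic (it is a blow-up, hence projective over $\cX$); birationality and the isomorphism off the minor locus follow because $I$ restricts to a locally principal (indeed unit) ideal on the complement of $V(I)$; and the fibers over $V(I)$ are at most $\P^1$'s, as is visible from the $\cX\times\P^1$ description — so no divisor is contracted and the resolution is small. I would also note, as the proposition does, that setting $c = -\ga^2$ instead yields the \emph{same} blow-up (the ideal $I$ of minors does not depend on which of $a$, $c$ one chose to extract a square root from — that choice was only a device to motivate the generators), and that the two small resolutions of \eqref{pagoda} are interchanged by the flop, i.e.\ by the other ruling; here this corresponds to swapping the two rows of $M$, equivalently to the $\la \leftrightarrow 1/\la$ identification failing — the two charts glue to $\P^1$ in one resolution and the opposite gluing gives the other. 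The main obstacle is purely computational bookkeeping in the smoothness verification near the bad sublocus $T = Y = 0$ inside the $\la$-finite chart; once the symmetry argument is set up this collapses, but writing it so that no case is silently skipped is where care is needed.
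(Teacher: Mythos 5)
There is a fatal misidentification at the heart of your argument: the space you compute with is not the blow-up of the ideal $I$ of $2\times2$ minors. You place $\mathrm{Bl}_I(\cX)$ inside $\cX\times\P^1$ and write its chart equations as ``row$_1$ of $M$ equals $\la$ times row$_2$'', i.e.\ $Z=\la T$, $-aT=\la Z$, $X-bT=\la Y$, $-aY=\la(X-bT)$. But a point of $\cX$ admits such a $\la$ only if \emph{all} $2\times2$ minors of $M$ vanish there, so your incidence variety lies over the determinantal locus $V(I)$, which is a proper subvariety of $\cX$; over a general point of $\cX$ (where $M$ has rank $2$) its fiber is empty. Hence it is not a birational modification of $\cX$ at all — it is (a resolution of) the rank-$\le1$ locus with its row ratio marked. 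The fact you observe, that $F$ becomes identically zero after substituting $a=-\la^2$, $Z=\la T$, $X-bT=\la Y$, is exactly the symptom: you have landed inside a codimension-$\ge1$ subvariety of $\cX$, not above it. The underlying confusion is between the ratio of the two \emph{rows} of $M$ and the ratio of the two \emph{generators} $(-1,\al)M$ of Reid's ideal: the latter would give a $\P^1$ only after passing to the double cover where $\al=\sqrt{-a}$ exists, and in any case is not the object named in the lemma.

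The blow-up of $I$ on $\cX$ naturally lives in $\cX\times\P^3$, since among the six minors only four are essentially distinct ($m_{23}=-m_{14}$, $m_{24}=a\,m_{13}$); this is how the paper proceeds. One takes the closure of the graph of $(P:Q:R:S)=(Z^2+aT^2:(X-bT)^2+aY^2:ZY-T(X-bT):Z(X-bT)+aTY)$, uses the Pl\"ucker-type relation $S^2-PQ+aR^2=0$ together with the identity that $Q+cP-2bR$ is proportional to $F$ to eliminate $Q=2bR-cP$, obtaining $S^2+cP^2-2bPR+aR^2=0$; the four determinantal syzygies then determine $(P:R:S)$ away from the singular points (so fibers are single points there, and conics over the singular points — smallness), and smoothness is verified in the affine charts $R=1$ and $P=1$, where the syzygies let one solve for ambient coordinates and exhibit each chart as a graph. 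Your chart-by-chart smoothness check, the reduction to ``$a=-\la^2$ plus two linear equations'', and the symmetry argument near $T=Y=0$ all concern the wrong space, so the proof does not establish the lemma; to repair it you would have to redo the computation in the $\P^3$ picture (or honestly construct and descend from the double cover), which is essentially the paper's argument.
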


\begin{proof}
The minors of the matrix $M$ are not independent, and the ideal needs only
four generators. 
The blow-up is the subset of $\C^3\times\C^4\times\P^3$,
which is the closure of the graph of the rational map
\[
(P:Q:R:S)=
(Z^2+aT^2 :  (X-bT)^2+aY^2  : ZY-T(X-bT)  : Z(X-bT)+aTY)\;.
\]
The relation between the minors gives $S^2-PQ+aR^2=0$.
Furthermore one sees that $Q+cP-2bR$ is proportional to $F$, so vanishes
on $F=0$. This  allows us  to eliminate $Q=2bR-cP$.
We find
\[
S^2+cP^2-2bPR+aR^2=0\;.
\]
This formula shows that we have a small modification.
The determinantal syzygies between the minors
of the matrix give the following four equations, where $Q$ is already
eliminated:
\[
\begin{array}{c@{}c@{}r}
YP-ZR-TS &{}={}&0\;, \\
(X-bT)P+aTR-ZS &=&0\;, \\
-cTP+(X+bT)R-YS &=&0\;, \\
(cZ-bY)P+(aY-bZ)R+XS &=&0\;. 
\end{array}
\]
These equations determine $(P:R:S)$ except when the rank of the coefficient
matrix is at most one: 
this happens exactly at the singular points.

To check smoothness we look at affine charts. The exceptional curve over the
origin is covered by  the charts $P=1$ and $R=1$.
In $R=1$ we can eliminate $Z$, $X$ and $a$, leaving
$(Y,P,S,T,b,c)$ as coordinates:
\begin{align*}
Z&=YP-TS\;,\\
X+bT&=cTP+YS\;,\\
   S^2+cP^2-2bP+a&=0\;.
\end{align*}
This shows that the space $\wh \cX$ is smooth in this chart.
Likewise we find in the chart $P=1$ that
\begin{align*}
Y&=ZR+TS\;,\\
X-bT&=-aTR+ZS\;,\\
S^2+c-2bR+aR^2&=0 \;.
\end{align*}
\end{proof}

To view $\wh \cX$ as simultaneous partial resolution
of the deformation \eqref{versal} of $D_4$ we have to go
back to the original coordinates.
The three singular points of the special fibre are
visible in the chart $R=1$, so we
only look at this chart. We eliminate $x$ and $z$
via
\begin{equation}\label{formules}
\begin{aligned}
x&=(y+\be_2)S-(z+\be_1)\ga_3P-\be_2\ga_3\;,\\
z&=(y+\be_2)P-\ga_3S+\be_4
\end{aligned}
\end{equation}
and are left with one equation
\begin{equation}\label{preseq}
S^2-(y+\be_2)P^3-(\be_1+\be_4-\ga_3S)P^2+(y-\be_2)P
+\be_4-\ga_3S=0   
\end{equation}
in the variables $(S,P,y;\be_1,\be_2,\ga_3,\be_4)$. 

Over the non-smooth component \eqref{discomp}
of the discriminant we have, using the parametrisation by the
reflection hyperplane $\al_1+\al_3=0$, that
\begin{align*}
x&=(\al_1+\al_2)(-\al_4S+(\al_1+\al_2)P)(S+\al_4P-\al_1)\;,\\
z&=-\al_1^2-(\al_1+\al_2)(S+\al_4P-\al_1)\;,\\
0&=(S+\al_4P-\al_1)(S+(\al_1+\al_2)P^2-\al_4P-\al_2)\;.
\end{align*}
So the curve $S+\al_4P-\al_1=0$ is the exceptional curve 
(it extends to the $P=1$
chart).

Over $\ga_3=0$ we have $x=y+\be_2=z-\be_4=0$ and the exceptional curve
$S^2-(\be_1+\be_4)P^2-2\be_2P+\be_4=0$.
Note that this curve is reducible if in addition $\be_2^2+\be_1\be_4+\be_4^2=0$,
that is, over the intersection of the two components of the discriminant. 

We now return to $3$-dimensional $cD_4$ singularities. We use
Arnol'd's notation for singularities, see \cite{AGV}.
\begin{prop}
A $3$-fold singularity with $D_4$ as general hyperplane section,
which has a small resolution with irreducible exceptional curve,
is of type $T_{3,3,2q+2}$, $Q_{6q+5}$ or $Q_{\rho+1,\de}$ with $\de$ odd.
\end{prop}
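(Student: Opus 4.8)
The plan is to produce, for each way of smoothing the partial resolution $\wh X$ of the $D_4$ singularity with irreducible central exceptional curve, the equation of the total space of the $1$-parameter family and to recognise the resulting $3$-fold singularity from Arnol'd's list. The starting point is the presentation equation \eqref{preseq} in the chart $R=1$, which describes the versal deformation $\wh\cX\to T/W_0$ of $\wh X$ with the base coordinates $(\be_1,\be_2,\ga_3,\be_4)$. A $1$-parameter smoothing of $\wh X$ with \emph{isolated} exceptional curve corresponds to a curve germ $t\mapsto(\be_1(t),\be_2(t),\ga_3(t),\be_4(t))$ in the base that is not contained in any component of the discriminant listed in the Lemma above, and whose image meets the discriminant only at the origin; pulling back \eqref{preseq} along such a germ and eliminating nothing extra gives the candidate $3$-fold. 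One then blows down the central curve to get the corresponding $cD_4$ singularity, whose equation is obtained by pulling back the deformation \eqref{versal} of $D_4$ along the same germ. So the combinatorial heart of the proof is to enumerate the germs, up to the obvious equivalences (coordinate changes in $t$, and the automorphisms of the $D_4$ diagram permuting $v_1,v_3,v_4$, i.e. the $\be_2\leftrightarrow$ stuff), that meet the discriminant transversally enough to keep the exceptional curve isolated.

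Concretely I would argue as follows. First I determine which tangent directions at the origin in $T/W_0$ give an isolated exceptional curve. The exceptional curve degenerates (acquires an extra component, or the total space acquires a non-isolated singularity along it) precisely over the components $\ga_3=0$ and \eqref{discomp} and their intersection — this is exactly what the computations after \eqref{preseq} record: over $\ga_3=0$ the curve is $S^2-(\be_1+\be_4)P^2-2\be_2P+\be_4=0$, reducible along $\be_2^2+\be_1\be_4+\be_4^2=0$, while over \eqref{discomp} the curve $S+\al_4P-\al_1=0$ stays irreducible. Hence to get an isolated curve the germ must be transverse to $\ga_3=0$, i.e. $\ga_3(t)$ must have a simple zero, so after reparametrising $t$ we may take $\ga_3=t$. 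Then $\be_1,\be_2,\be_4$ are arbitrary power series in $t$, and the three $A_1$-reflection-hyperplane components $4\be_4=\ga_3^2$, $\be_2=\pm2\be_1$ impose only that the smoothing parameter direction is generic with respect to them — automatically true for a generic choice and not affecting the isolatedness. So the classification reduces to normal forms of the triple $(\be_1(t),\be_2(t),\be_4(t))$ of power series under the remaining coordinate freedom, weighted by how \eqref{preseq} depends on them.

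Next I substitute $\ga_3=t$ into \eqref{preseq} and put the equation into Arnol'd normal form. With $\ga_3=t$, \eqref{preseq} reads
\[
S^2 + tS(P^2+1) - (y+\be_2)P^3 - (\be_1+\be_4)P^2 + (y-\be_2)P + \be_4 = 0,
\]
a quadric in $S$, so completing the square in $S$ (allowed since $2$ is invertible) turns it into $S^2 = $ (a function of $P,y,t$). The right-hand side is, up to the coordinate change already built into \eqref{formules}, a deformation of the $D_4$ equation $-(y+\be_2)P^3+\cdots$, and feeding in the three power series $\be_1(t),\be_2(t),\be_4(t)$ produces exactly the $3$-fold singularities $x^2 + (\text{cubic in }y,z) + (\text{terms in }t)$ that Arnol'd tabulates: the parabolic $\widetilde D_4 = T_{3,3,3}$-type deformations $T_{3,3,2q+2}$, and the exceptional families $Q_{6q+5}$ and the $2$-parameter series $Q_{\rho+1,\de}$. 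The task is to match the leading orders: the exponent $2q+2$ or $6q+5$ and the parity condition ``$\de$ odd'' must come out of the orders of vanishing of $\be_2,\be_4$ (and the constraint $\ga_3=t$ with simple zero). I expect the parity constraint $\de$ odd to be the subtle point — it should reflect that an even $\de$ would make the exceptional curve $S^2-(\be_1+\be_4)P^2-2\be_2P+\be_4=0$ split off a component as $t\to0$ (equivalently, the germ would lie in, or be tangent to high order to, the reducibility locus $\be_2^2+\be_1\be_4+\be_4^2=0$), contradicting irreducibility of the exceptional set; conversely for $\de$ odd and the other two series one checks directly from the affine charts (as in the proof of the Lemma on $\wh\cX$) that the total space is smooth and the curve isolated. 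Finally I would invoke the determinacy of these singularities to conclude that no other $cD_4$ with irreducible small resolution arises, completing the list.

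The main obstacle, I anticipate, is bookkeeping rather than conceptual: carefully tracking which orders of vanishing of the three series $\be_i(t)$ are genuinely inequivalent under the allowed changes of coordinates, and in particular pinning down the parity condition and the precise indices $2q+2$, $6q+5$, $\rho+1$ against Arnol'd's conventions in \cite{AGV}. The clean way to organise this is to assign weights to $P$, $y$, $t$, $S$ so that the $D_4$ part is quasihomogeneous, read off the Newton boundary of the deformed equation for each choice of the $\be_i$, and compare with the tables; the isolatedness of the exceptional curve is then automatic from transversality to $\ga_3=0$ and genericity with respect to the three $A_1$-hyperplanes, which I verify once and for all in the charts $P=1$ and $R=1$.
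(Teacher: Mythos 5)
Your plan (pull back the versal family \eqref{versal} along a curve germ in $T/W_0$, use smoothness of the pulled-back family of partial resolutions as the constraint, and identify the resulting $cD_4$ equation in Arnol'd's lists) is the same overall strategy as the paper's, but the step on which your whole enumeration rests is wrong. You claim that an isolated exceptional curve forces the germ to be \emph{transverse} to $\ga_3=0$, so that after reparametrisation $\ga_3=t$. This is not a consequence of isolatedness (the only requirement coming from $\ga_3$ is $\ga_3(t)\not\equiv 0$, i.e.\ the germ is not contained in that discriminant component), and imposing it destroys the classification: the order $q=\ord_t\ga_3$ is precisely the index that produces the infinite series $T_{3,3,2q+2}$ and $Q_{6q+5}$, and the paper's own explicit example takes $\ga_3=it^k$ with arbitrary $k$. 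With $\ga_3=t$ you would only ever see $q=1$. The genuine constraints come instead from smoothness of the total space in the chart $R=1$: differentiating \eqref{preseq}, the possible singular points at $P=0$ give (since $\ga_3(0)=0$) the condition $\be_4'(0)\neq 0$ --- so the correct normalisation is $\be_4=t$, not $\ga_3=t$ --- and the points at $P=\pm1$ give $\be_1'(0)\pm2\be_2'(0)\neq 0$. These are exactly the conditions you dismiss as ``automatically true for a generic choice and not affecting the isolatedness''; they are hypotheses on the germ, not genericity statements, and they are used in an essential way afterwards (e.g.\ to show the $3$-jet of the blown-down equation is an irreducible cubic cone).

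A second, smaller gap: your guess that ``$\de$ odd'' reflects a reducibility constraint on the exceptional conic over $\ga_3=0$ is not how the parity arises. In the paper one blows up the origin of the $3$-fold, completes a square, and finds that the residual singularity is governed by the contact of the smooth branch $\zeta^2+\zeta\bar\be_1+\bar\be_2^2$ (when $1+b_1+b_2^2=0$) with the cusp $\zeta t\bar\ga_3^2+(\zeta-1)^2$; this contact order is $\min(2q-1,2\rho)$ with $\rho=\ord_t(1+\bar\be_1+\bar\be_2^2)$, giving $A_{2q-2}$ (hence $T_{3,3,2q+2}$), $E_{6q-5}$ (hence $Q_{6q+5}$), or $J_{\rho,\de}$ with $\de=2(q-\rho)-1$ (hence $Q_{\rho+1,\de}$). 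The oddness of $\de$ is automatic from this arithmetic, not an extra condition you need to impose. If you drop the transversality claim, keep $q=\ord_t\ga_3$ as a free parameter, normalise $\be_4=t$ from the smoothness analysis, and then do the blow-up (or an equivalent Newton-boundary computation) carefully, your argument becomes essentially the paper's proof.
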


\begin{proof}
The singularity is a 
1-parameter smoothing of the hyperplane section $D_4$, 
so can be obtained by pull-back from the versal family \eqref{versal}.
We have to describe a curve 
in the base space, so now we take the $\be_i$ and $\ga_3$ to be
functions of a variable $t$. Having a small resolution
with irreducible exceptional curve
gives two conditions, that the curve  does not lie in the discriminant, 
and that the total space of the  1-parameter deformation
of the partial resolution of $D_4$ is smooth.
The first condition translates into $\ga_3(t)\not\equiv0$ and a more complicated
one for the other component.  Smoothness of the total space can be checked
in the $R=1$ chart. We  look at the equation 
\eqref{preseq}
and its derivatives with respect to the variables $y$, $S$,
$P$  and $t$. A possible singular point satisfies
\[
\begin{array}{c@{}c@{}r}
(P^2-1)P&=0\;,\\
2S+\ga_3(P^2-1)&=0\;,\\
-3P^2(y+\be_2)-2(\be_1+\be_4-\ga_3s)P+y-\be_2&=0\;,\\
-\be_1'P^2-\be_2'(P^3+P)+(\ga_3's-\be_4')(P^2-1)&=0\;.
\end{array}
\]
Here $\be_i'(t)$ is the derivative of the power series $\be_i(t)$.
If $P=0$, $y=\be_2$ and $2S=\ga_3$,
$S^2+\be_4-\ga_3S=0$, so $4\be_4=\ga_3^2$. The condition of nonsingularity
is then that $2\ga_3\ga_3'-4\be_4'\neq0$, at $t=0$.
If $P=\pm1$, $S=0$, $\be_1\pm2\be_2=0$ and $y\pm\be_4=0$
and we get the condition at  $t=0$  that 
$\be_1'\pm2\be_2'\neq0$.

As $\ga_3(0)=0$ the condition at  $P=0$ becomes $\be_4'\neq0$, so
by a coordinate change we may assume that $\be_4(t)=t$.
We now write $\be_i(t)=t\bar \be_i(t)=t(b_1+\dots)$ and 
$\ga_3(t)=t\bar\ga_3(t)=t(c_3+\dots)$.
We put $F=x^2+G(y,z,t)$.
The $3$-jet of $G$ is
\[
j^3G=z(y+b_2t)^2-2b_2t(y+b_2t)(z-t)+(-z-b_1t)(z-t)^2\;.
\]
This cubic defines a cubic curve in $\P^2$ with singular point in 
$(y:z:t)=(-b_2:1:1)$. We claim that it is irreducible. To show this we
compute a parametrisation using the pencil
$ \la (y+b_2t)=\mu(z-t)$ of lines through the singular point.
The curve is irreducible if and only if it has a rational parametrisation
of degree 3.
We find 
\[
(z-t)^2((\mu^2-\la^2)z-(2\mu b_2+b_1\la))\la t=0\;,
\]
so the curve is reducible if and only if   $\mu^2-\la^2$ and
$(2\mu b_2+b_1\la)\la$ have a factor in common, but this only happens
if $b_1\pm2b_2=0$, which is excluded by  non-singularity.
The cubic has a cusp if $1+b_1+b_2^2=0$.
Otherwise the curve is a nodal cubic, and the singularity is a cusp
of type $T_{3,3,r}$.  To determine the exact type of the singularity, also
in the cuspidal cubic case, we blow up the origin. As said, we write
$\be_i=t\bar\be_i$, $\ga_3=t\bar\ga_3$.  We look at the appropriate chart,
with coordinates $(t,\eta,\zeta)$, such that $(t,y,z)=(t,\eta t, \zeta t)$.

The strict transform of $G$ is
\[
(-\zeta^2-\zeta\bar\be_1-\bar\be_2^2)t\bar\ga_3^2
  +\zeta(\eta+\bar\be_2)^2-2\bar\be_2(\eta+\bar\be_2)
  (\zeta-1)+(-\zeta-\bar\be_1)(\zeta-1)^2\;.
\]
The singular point lies at $(t,\eta,\zeta)=(0,-b_2,1)$. 
We  multiply the equation with the unit $\zeta$ 
and complete the square to obtain
\[
(\zeta\eta+\bar\be_2)^2-
(\zeta^2+\zeta\bar\be_1+\bar\be_2^2)(\zeta t\bar\ga_3^2+(\zeta-1)^2)\;.
\]

If $1+b_1+b_2^2\neq0$, then $\zeta^2+\zeta\bar\be_1+\bar\be_2^2$ is a
unit, and the singularity on the strict transform is an $A_{2q-2}$ with
$q=\ord_t\ga_3=\ord_t\bar\ga_3+1$ and the original singularity is of type
$T_{3,3,2q+2}$.

Otherwise $\zeta^2+\zeta\bar\be_1+\bar\be_2^2$ is the equation 
of a curve, which
is smooth in the point
$(t,\zeta)=(0,1)$, as $b_1=-2$ and $1+b_1+b_2^2=0$ gives $b_2^2=1$,
contradicting the condition $b_1\neq\pm2b_2$ for nonsingularity.
Let $\rho=\ord_t(1+\bar\be_1+\bar\be_2^2)$. The order of
contact of the smooth branch with the cusp 
$\zeta t\bar\ga_3^2+(\zeta-1)^2$ is equal to $\min(2q-1,2\rho)$. 
If the minimum is $2q-1$, then there is an $E_{6q-5}$ and the
original singularity is of type $Q_{6q+5}$. Otherwise we set
$\delta=2(q-\rho)-1$; the singularity is of type $J_{\rho,\delta}$
with  original singularity of type $Q_{\rho+1,\delta}$.
\end{proof}

\begin{remark}
The original example of Laufer \cite{la} of an exceptional curve
with normal bundle of type $(1,-3)$ is 
$x^2+y^3+zt^2+yz^{2q+1}$ of type $Q_{6q+5}$. 
One needs a coordinate transformation to bring this
equation into our normal form. Note that the general 
hyperplane section does not give the standard 
quasi-homogeneous form for $D_4$.
\end{remark}

\section{A specific example}
We now give an example of a non-embeddable
$1$-convex manifold.
To have one we can compute with, we look for a 
simple formula with only a few terms.

In the versal family \eqref{versal} we substitute
$\be_2=0$, $\be_4=t$, $\be_1=-2t$ and $\ga_3=it^k$.
After the coordinate transformation $z\mapsto z+t$
we obtain
the $3$-fold singularity 
\[
f=x^2+(t+z)y^2+(t-z)z^2-(t^2-z^2)t^{2k}
\]
of type $T_{3,3,2k+2}$. 
The small resolution of the previous section 
gives an embeddable $1$-convex manifold.
The given formula determines a curve in the base 
of the versal deformation \eqref{versal}, which
intersects the discriminant in $t^4-2t^{2k+3}=0$, 
so the hypersurface
$\{f=0\}\subset\C^4$ has singular points
for $t^{2k-1}=\frac12$. 
They are also resolved by the construction.

Now we perturb the function $f$ by adding terms  of 
high order. 
We take only one monomial, which makes the resulting
hypersurface  very singular
at infinity. It is therefore not an example for
Corollary \ref{result}. We show that a small
resolution is non-embeddable by explicitly exhibiting
a $3$-chain with boundary on the exceptional curve.

\begin{prop}
\label{onesing}
The affine hypersurface with equation
\[
h= x^2+(t+z)y^2+(t-z)z^2-(t^2-z^2)t^{2k}+\ep t^{2m}=0\;,
\]
where $m>k+1$,
has for almost all $\ep$ only one singular point, 
of type $T_{3,3,2k+2}$, isomorphic to that of $f$.
\end{prop}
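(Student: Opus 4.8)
The plan is to analyse the singular locus of $\{h=0\}$ directly. First I would compute the partial derivatives of $h$. We have
\[
h_x=2x,\qquad h_y=2(t+z)y,\qquad h_z=y^2-z^2+2(t-z)z+2zt^{2k}-\ep'\cdot(\text{terms}),
\]
\[
h_t=y^2+z^2-2k(t^2-z^2)t^{2k-1}-2t^{2k+1}+2m\ep t^{2m-1}.
\]
From $h_x=0$ we get $x=0$, and from $h_y=0$ we get $y=0$ or $t+z=0$. The strategy is to treat these two cases separately and show that in each case the remaining equations $h=h_z=h_t=0$ cut out, for generic $\ep$, only the origin. Since the origin is already known to be of type $T_{3,3,2k+2}$ (it lies on the curve in the base of the versal family, and $m>k+1$ guarantees that the added monomial $\ep t^{2m}$ has order higher than the determinacy-relevant terms, so by finite determinacy the germ at $0$ is unchanged), the content of the proposition is that no \emph{other} singular points appear.

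The cleanest way to organise this is a dimension/genericity count: consider the incidence variety $\Sigma\subset\C^4\times\C$ of pairs $((x,y,z,t),\ep)$ with $h=h_x=h_y=h_z=h_t=0$. The fibre over $\ep=0$ is the singular locus of $\{f=0\}$, which by the earlier discussion consists of the origin plus finitely many points (the $t^{2k-1}=\tfrac12$ points), hence is zero-dimensional away from... actually it is entirely zero-dimensional, so $\dim\Sigma\le 1$ near $\ep=0$ and every component of $\Sigma$ dominating the $\ep$-line contributes at most one moving singular point for each $\ep$; the $\ep$'s for which such a point collides with infinity or with the origin, or for which an extra component appears, form a proper (closed) subset. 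I would make this precise by eliminating variables: using $x=0$ and the case split on $h_y=0$, reduce to a system in $(y,z,t)$ (or $(z,t)$) and $\ep$, and show the projection to the $\ep$-line is quasi-finite with the origin as a section; then ``almost all $\ep$'' means outside the branch locus together with the $\ep$ for which a solution escapes to infinity.

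In more detail for the two cases: if $y=0$, $x=0$, the equations become $h=(t-z)z^2-(t^2-z^2)t^{2k}+\ep t^{2m}=0$ together with $h_z$ and $h_t$; eliminating $\ep$ between $h=0$ and one derivative expresses $\ep$ as a rational function of $(z,t)$, and substituting into the other equations gives a curve in $(z,t)$ whose only point with the corresponding $\ep$ finite and in the prescribed range is expected to be $z=t=0$ — this is a direct computation with low-degree polynomials once one uses $m>k+1$ to control orders. If instead $t=-z$, $x=0$, then $h=-2z\cdot z^2+0+\ep(-z)^{2m}=-2z^3+\ep z^{2m}=0$, so $z^3(\ep z^{2m-3}-2)=0$, forcing $z=0$ (whence $t=0$) or $\ep z^{2m-3}=2$; in the latter subcase one checks the derivative equations $h_y,h_z,h_t$ are not simultaneously satisfied for generic $\ep$ (again a short elimination), so no new singular point arises on the plane $t+z=0$ either.

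The main obstacle I expect is the second case $t=-z$, or more precisely making the genericity statement uniform: one must rule out that an entire curve of singular points, rather than isolated moving points, sits over a generic $\ep$ — equivalently that $\Sigma$ has no component lying over the whole $\ep$-line on which the $(x,y,z,t)$-coordinates are non-constant in some direction transverse to the fibres. This is where the explicit shape of $h$ (only a handful of monomials) is essential: the elimination is short enough to carry out by hand or by a quick Gröbner basis computation, and the hypothesis $m>k+1$ is exactly what makes the leading behaviour of $h$ near $0$ match that of $f$ while keeping the perturbation generic at the finitely many other potential singular points. Once the singular locus is shown to be a reduced point for generic $\ep$, finite determinacy of $T_{3,3,2k+2}$ (the added term $\ep t^{2m}$ lies in $\mathfrak m^{2m}\subset\mathfrak m^{2k+4}$, well inside the Jacobian ideal power that guarantees determinacy) identifies the germ, completing the proof.
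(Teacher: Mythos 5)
Your road map coincides with the paper's: set $x=0$, split on $h_y=0$ into the cases $t+z=0$ and $y=0$, and eliminate $\ep$ to show extra singular points occur only for finitely many $\ep$. But the argument as written has gaps precisely at the steps that carry the proof. The incidence-variety count does not work as stated: upper semicontinuity of fibre dimension only controls components of $\Sigma$ that meet the fibre over $\ep=0$, so finiteness of $\Sing\{f=0\}$ does not exclude a component dominating the $\ep$-line (with points escaping to infinity as $\ep\to0$) whose generic fibre is a curve. You flag this yourself, but then defer its exclusion to an unexecuted ``short elimination'' -- and that elimination \emph{is} the proof. Concretely, in the case $y=0$, $z\neq0$ one gets $3z=2t+2t^{2k}$ (hence $t\neq0$), and eliminating $\ep$ from $h=0$ and $h_t=0$ yields the $\ep$-free relation $(4m-6)+(15k+3-7m)t^{2k-1}+(6k-2m)t^{4k-2}=0$, a quadratic in $t^{2k-1}$; so there are finitely many such $t$, each determining at most one $\ep$, whence only finitely many bad $\ep$ and in particular no positive-dimensional singular locus for generic $\ep$. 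In your case $t=-z$ the subcase $\ep z^{2m-3}=2$ is left to ``one checks''; in fact $h_z=0$ gives $y^2=5t^2+2t^{2k+1}$, and then $h=0$ forces $t\,\partial_t h=(6-4m)t^3$, so this case produces no singular point for \emph{any} $\ep$, not just generic ones. You also do not treat the subcase $y=z=0$, $t\neq0$, which is exactly where the hypothesis $m>k+1$ enters, via $t\,\partial_t h=2(m-k-1)t^{2k+2}\neq0$. (Minor: $h_z$ contains no $\ep$-term; your ``$-\ep'\cdot(\text{terms})$'' is spurious.)

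The identification of the germ at the origin is also asserted rather than proved: you claim $\mathfrak m^{2m}\subset\mathfrak m^{2k+4}$ lies ``well inside'' the determinacy bound, but the general estimate (determinacy at most $\mu+1=2k+8$ here) is not enough when $m=k+2$, so you would need to verify that $f$ is actually $(2k+3)$-determined, which requires the specific structure of the Jacobian ideal of $T_{3,3,2k+2}$. The paper sidesteps this by noting that the perturbed germ is again a cusp of the same type and that cusp singularities have no moduli. In short: correct strategy, essentially the paper's, but the decisive computations -- the two eliminations and the determinacy/no-moduli step -- are missing.
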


\begin{proof}
We compute the singular locus 
$V(h,\partial_x h, \partial_y h,\partial_z h,\partial_th)$:
\begin{align*}
h &= x^2+(t+z)y^2+(t-z)z^2-(t^2-z^2)t^{2k}+\ep t^{2m}\;,\\
\partial_x h &= 2x\;,\\
\partial_y h&= 2(t+z)y\;,\\
\partial_z h&= y^2+2tz-3z^2+2zt^{2k}\;,\\
\partial_t h&= y^2+z^2-(2k+2)t^{2k+1}+2kz^2t^{2k-1}+2m\ep t^{2m-1}\;.
\end{align*}
A singular point always satisfies $x=0$.
If $z+t=0$, then $y^2=5t^2+2t^{2k+1}$ and $2t^3+\ep t^{2m}=0$.
This makes that $t\partial_th=6t^3+2m\ep t^{2m}=
(6-4m)t^3$, so $t=z=y=x=0$. At the origin $h$ has the
same singularity as $f$ (a  cusp singularity has no moduli).
\\
If $z+t\neq 0$, then $y=0$. If also  $z=0$ holds, 
then $t^{2k+2}=\ep t^{2m}$, and
$t\partial_th=2(m-k-1)t^{2k+2}\neq 0$, as $t\neq 0$. 
\\
If $z\neq0$, then $3z=2t+2t^{2k}$. This shows that 
$t\neq0$. Therefore
\[
\begin{array}{c@{}c@{}r}
(t-2t^{2k})^2(4t+t^{2k})+27\ep t^{2m}&=0\;,\\
(t-2t^{2k})(4t-2t^{2k}
-2k(5t+2t^{2k})t^{2k-1})+18m\ep t^{2m-1}&=0\;.
\end{array}
\]
The first equation shows that $t^{2k-1}=\frac12$ gives no longer
singular points for $\ep\neq 0$
We eliminate $\ep$, divide by $2t^2(t-2t^{2k})$ and find
\[
(4m-6)+(15k+3-7m)t^{2k-1}+(6k-2m)t^{4k-2}=0\;.
\]
This is a quadratic equation for $t^{2k-1}$. 
Only for finitely many values of $\ep$ there are singular
points outside the origin. If $m\neq3k$, then
$3(m-3k)^2\ep t^{2m-3}+(4m-10k-1)(2k-1)t^{2k-1}-2(2mk-m-2k^2-k+1)=0$. 
For $m=3k$ the equations simplify:
$t^{2k-1}=2$,  $\ep t^{2m-3}=-2$, but on the other hand
$t^{2m-3}=t^{6k-3}=8$, so singularities only 
exist for $\ep=-1/4$.
\end{proof}


\begin{theorem}
A small resolution of the affine hypersurface $\{h=0\}$,
with $h$ as in Proposition \ref{onesing} and $\ep >0$, 
is a non-embeddable 1-convex manifold,
with rational irreducible exceptional curve with normal
bundle of type $(1,-3)$. 
\end{theorem}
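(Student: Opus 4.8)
The plan is to establish three things: that $\{h=0\}$ admits a small resolution $X$ with irreducible exceptional curve $C$; that this curve is rational with normal bundle $(1,-3)$; and that $X$ is non-embeddable. The first two are structural consequences of the work in Section~2. Since $h$ is obtained from the versal family \eqref{versal} by the substitution $\be_2=0$, $\be_4=t$, $\be_1=-2t$, $\ga_3=it^k$ followed by $z\mapsto z+t$ and the addition of the monomial $\ep t^{2m}$, it corresponds to a curve in the base of the versal deformation of the partial resolution of $D_4$. I would first check that this curve does not lie in the discriminant (for $\ep>0$ and $m>k+1$ it is not contained in $\ga_3=0$ nor in the component \eqref{discomp}) and that the total space of the induced family is smooth along the relevant fibre — this is the same computation as in Proposition~\ref{onesing}, which shows the only singularity is the $T_{3,3,2k+2}$-point at the origin, together with the smoothness criterion in the $R=1$ chart from the proof that led to \eqref{preseq}. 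Pulling back the small resolution $\wh\cX$ of that section then gives a small resolution $X\to\{h=0\}$ with irreducible exceptional curve $C$, and since the length of the contraction is $2$ (the $D_4$ case), the general hyperplane section is $D_4$ and hence, by the result of Laufer and Kollár–Morrison cited in Section~1, $C\cong\P^1$ with normal bundle of type $(1,-3)$.

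The substance of the theorem is non-embeddability. By Theorem~\ref{...} (the criterion of Ancona–van Coevering / the results of Vâjâitu and Colţoiu recalled in Section~1), since $H_2(X,\Z)$ is finitely generated it suffices to show that $C$, or some positive multiple of it, is homologically trivial in $H_2(X,\Z)$; equivalently, that $2C$ (or $C$) bounds a real $3$-chain in $X$. This is the route announced in the introduction: "We explicitly show that (twice) the exceptional curve bounds a real 3-chain." So the core of the proof is the construction of an explicit singular $3$-chain $\Gamma$ in $X$ with $\partial\Gamma = 2[C]$ (the factor $2$ being expected because the normal bundle has odd invariants, exactly as in the $(0,-2)$ examples of Bassanelli–Leoni).

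To build the chain I would work in the two affine charts $P=1$ and $R=1$ that cover a neighbourhood of $C$, using the explicit equations derived in Section~2 — in the $R=1$ chart equation \eqref{preseq} together with the substitutions \eqref{formules} (specialised to $\be_2=0$, $\be_1=-2t$, $\be_4=t$, $\ga_3=it^k$, then $z\mapsto z+t$, then $+\,\ep t^{2m}$), and similarly in the $P=1$ chart. Over the locus $t=0$ the exceptional curve has equation $S^2-(\be_1+\be_4)P^2+\be_4=0$, i.e. $S^2+tP^2+t|_{t=0}=0$, which degenerates; more usefully, for small $t\neq 0$ the fibre near $C$ is smooth and one can follow a family of $2$-cycles. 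The idea, generalising Reid's pagoda discussion and the chain construction of Bassanelli–Leoni, is: fix a small real interval of values of $t$ (or a small loop), over each such $t$ exhibit an explicit real $2$-sphere (or real $2$-cycle) $\Sigma_t\subset X$ homologous to a multiple of the fibre class, and let $\Gamma=\bigcup_t \Sigma_t$; one then checks that as $t\to 0$ the family $\Sigma_t$ collapses onto $2[C]$ (because the double cover structure visible in $S^2 = (\text{stuff})$ forces the multiplicity $2$), while at the other end $\Sigma_t$ can be capped off by a $3$-ball inside the smooth affine fibre (which is contractible, being a smooth affine hypersurface deformation-retracting appropriately). Assembling these gives a genuine $3$-chain with boundary $2[C]$, whence $2[C]=0$ in $H_2(X,\Z)$ and $X$ is non-embeddable.

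The main obstacle is this last step: producing the $3$-chain in closed form and verifying that its boundary is exactly $2[C]$ and nothing else. The bookkeeping is delicate because the chart transition $P=1 \leftrightarrow R=1$ must be tracked for the pieces of $\Gamma$, and one must be sure no spurious boundary is introduced at infinity in the charts or along the other strata of the discriminant curve $t^4 - 2t^{2k+3}=0$ (which, by Proposition~\ref{onesing}, contributes no singular points of the total space once $\ep>0$, but whose preimage still needs to be understood for the chain to close up). I expect the verification that the collapsing family $\Sigma_t$ has limit precisely $2C$ — i.e. extracting the correct orientation and multiplicity from the quadratic relation $S^2 = \cdots$ — to be the technical heart, exactly as the analogous multiplicity-$2$ phenomenon was the crux in the $(0,-2)$ case.
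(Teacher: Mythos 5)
Your structural part (existence of the small resolution, irreducibility of $C$, and normal bundle $(1,-3)$ via the $D_4$ hyperplane section and length $2$) is fine and agrees with the paper, and your overall strategy for non-embeddability --- sweep out a real $3$-chain by a family of $2$-spheres $\Sigma_t$ that collapses onto $2C$ as $t\to 0$ --- is indeed the paper's strategy. But the proposal has a genuine gap at exactly the two places where the work lies. First, you never actually produce the spheres: the paper's point is that for real $t>0$ the real cubic $\bar h_t(y,z)$ (where $h=x^2+\bar h_t$) has an oval, and $\Sigma_t$ is the real double cover of the interior of that oval branched along it; without this identification there is nothing explicit to integrate over $t$. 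Second, and more seriously, your closing of the chain at the large-$t$ end is wrong. The fibre $\{x^2+\bar h_t(y,z)=0\}$ is a smooth affine surface, not contractible (it is a branched double cover of $\C^2$ with nontrivial $H_2$), and $\Sigma_t$ is precisely a vanishing cycle of the node that forms when the oval disappears, so it has self-intersection $-2$ in the fibre and is \emph{not} null-homologous there; no $3$-ball or $3$-chain inside the fibre can cap it. The correct mechanism --- and the only place where the hypothesis $\ep>0$ enters, a hypothesis your argument never uses --- is that the oval shrinks to an isolated real point at the unique real solution of $t^{2k+2}=\ep t^{2m}$, which by Proposition \ref{onesing} is a smooth point of the threefold; the family of spheres therefore closes up by itself into a smooth real $3$-manifold $M$ with boundary only over $t=0$, no capping needed.

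Finally, the multiplicity-$2$ statement cannot be left as ``the double cover structure forces it'': the paper establishes it by an explicit computation in the chart $R=1$, rescaling by $\xi=x/\sqrt{t^2-z^2}$, $\eta=y/\sqrt{t-z}$, $\zeta=z/\sqrt{t+z}$ to exhibit $\Sigma_t$ as the sphere $\xi^2+\eta^2+\zeta^2=t^{2k}$, parametrising it by stereographic projection, and showing that as $t\to0$ the coordinates of the resolution tend to $S\to 0$, $P\to \frac12(w^{-1}-w)$, a degree-$2$ map $\P^1\to\P^1$ onto $C$. That limit computation is the technical heart you defer, and together with the incorrect capping step it means the proposal, as written, does not yet prove non-embeddability.
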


\begin{proof}
The normal bundle on a small resolution
is as stated, because the general hyperplane
section through the singular point is of type $D_4$.

We prove that the manifold is not embeddable by showing that the exceptional curve
$C$ is  rationally zero-homologous: $2C$ is a boundary.

We write $h(x,y,z,t)=x^2+\bar h_t(y,z)$ and $f(x,y,z,t)=x^2+\bar f_t(y,z)$,
and consider $\bar h_t(y,z)$  and $\bar f_t(y,z)$ as a families 
of affine cubic curves.
For all real $t>0$ the curve $\bar f_t(y,z)$
has three infinite branches and an oval with the
origin in its interior, except for the $t$-value $t^{2k-1}=\frac12$, 
when the total space has a singular point. Then the intersection
with the $z$-axis, given by $(t-z)(z^2-(t+z)t^{2k})=0$,
has $z=t$ as double root.
We obtain $\bar h_t(y,z)$  by adding  the term $\ep t^{2m}$ to
$\bar f_t(y,z)$. As $\ep>0$, there is no longer a double root.
With increasing $t$ the oval becomes smaller, and vanishes if
$t^{2k+2}=\ep t^{2m}$. This equation has only one
real solution. For that $t$-value the curve $\bar h_t(y,z)$
has a singularity, as is  easily seen from the computations in the proof of 
Proposition \ref{onesing}; this singularity is an isolated real point.
We show the curves for $\ep=1$, $k=2$ and $m=6$.
The pictures are made with the {\sc Xalci} web demo
at {\tt exacus.mpi-inf.mpg.de}.
Figure \ref{cubicsfig} shows that for small $t$ the curves 
$\bar h_t$ and $\bar f_t$ look almost the same, and that $\bar h_t$
does not have a double point.
Figure \ref{vanishfig} shows how the oval of $\bar h_t$ 
first grows and then vanishes.
\begin{figure}
\includegraphics[width=.7\textwidth]{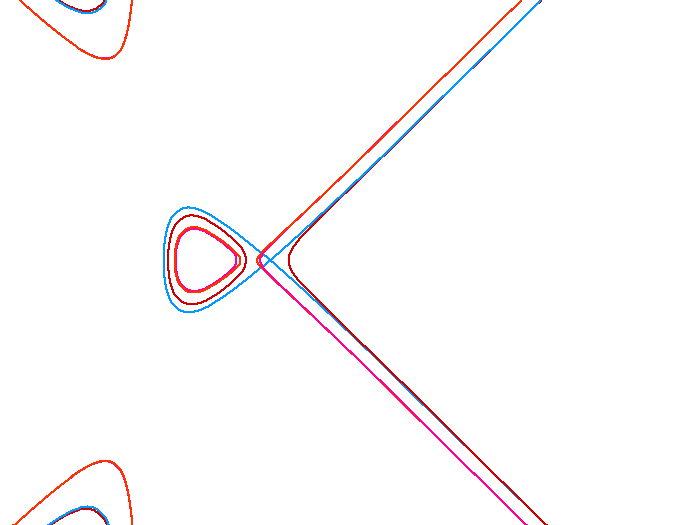}
\caption{$\bar h_t$ and $\bar f_t$ with
$k=2$, $m=6$ and $\ep=1$ for $t=\frac 23, \sqrt[3]{\frac12}$.}
\label{cubicsfig}
\end{figure} 
\begin{figure}
\includegraphics[width=.7\textwidth]{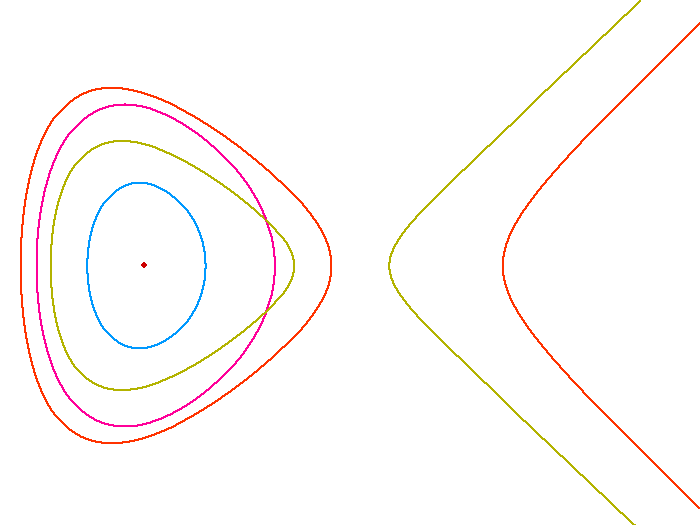} 
\caption{$\bar h_t$  with
$k=2$, $m=6$ and $\ep=1$ for $t=\frac 23, \sqrt[3]{\frac12},
\frac{19}{20},\frac{99}{100},1$.}
\label{vanishfig}
\end{figure}
The family of surfaces $x^2+\bar h_t(y,z)$ 
is the family of double covers 
of the $(y,z)$-plane, branched along the curves
$\bar h_t(y,z)$. For 
$0<t< \sqrt[2m-2k-2]{1/\ep}$ there is a component of the real locus,
which is a double covering of the interior of the  oval, 
branched along the oval itself, while for
$t= \sqrt[2m-2k-2]{1/\ep}$ there is an isolated real point.
The component is
diffeomorphic to a 2-sphere. Together with the isolated point 
they form a smooth
real $3$-dimensional manifold 
$M$ in the  half-space $\{t >  0\}$, which is
compactified by the singular point at the origin.

On the small resolution the manifold $M$ has boundary on the 
exceptional set. 
To compute it, we look at $f$.
For small $t>0$ the  value of $z^2$ on the oval is approximately at most
$t^{2k+1}$, so $|z|\ll t$.  We divide the equation by the unit
$t^2-z^2$ and use
the coordinate transformation 
\[
\xi=\frac{x}{\sqrt{t^2-z^2}},\qquad 
 \eta=\frac{y}{\sqrt{t-z}},\qquad \zeta=\frac{z}{\sqrt{t+z}},
\] 
valid in  a 
neighbourhood of the oval.
Now
\[
\frac{x^2}{t^2-z^2}=\xi^2, \qquad
 \frac{y^2}{t-z}=\eta^2,\qquad \frac{z^2}{t+z}=\zeta^2,
\]
so the transformation brings the 2-sphere in evidence:
\[
\xi^2+\eta^2+\zeta^2=t^{2k}\;.
\]
We have to compute the limit for $t\to 0$ on the small resolution.
We look at the chart $R=1$. 
Rather than computing the inhomogeneous coordinates
$P$ and $S$ from the homogeneous expressions $P/R$ and $S/R$, we find 
$P$  and $S$ from the formulas \eqref{formules} for $z$ and  $x$, 
which after our substitution and
coordinate transformation $z\mapsto z+t$ become
\begin{align*}
x&=yS-i(z-t)t^kP\;,\\
z&=yP-it^kS\;.
\end{align*}
This gives us 
\[
S=\frac{yx+iz(z-t)t^{k}}{y^2-(t-z)t^{2k}}\;,
\qquad
P=\frac{yz+ixt^{k}}{y^2-(t-z)t^{2k}}\;.
\]
In the coordinates introduced above
\[
S=\frac{\eta\xi-i\zeta t^{k}}{\eta^2-t^{2k}}\sqrt{t+z}\;,
\qquad
P=\frac{\eta\zeta+i\xi t^{k}}{\eta^2-t^{2k}}\sqrt{\frac{t+z}{t-z}}\;.
\]
We do not express the square roots in the variable $\zeta$, but observe
that on our component of the real locus
\[
\lim_{t\to0}\sqrt{t+z}=0\;,\qquad
\lim_{t\to0}\sqrt{\frac{t+z}{t-z}}=1\;.
\]
We parametrise the $2$-sphere of radius $t^k$ 
with the inverse of a stereographic projection:
with $w=u+iv$ we put 
\begin{align*}
 \xi&=\frac{2v}{w\bar w+1}t^{k}\;,\\
\eta&=\frac{w\bar w-1}{w\bar w+1}t^{k}\;,\\
\zeta&=\frac{2u}{w\bar w+1}t^{k}\;.
\end{align*}
With these values we find
\[
\lim_{t\to0}S=0\;,
\qquad
\lim_{t\to0}P=\frac{2(w\bar w-1)u+2i(w\bar w+1)v}{-4w\bar w}=
\frac12\left(\frac1w-w\right)\;.
\]
The map $P=(w^{-1}-w)/2$ is degree 2 map from $\P^1$ to $\P^1$, 
showing that the
boundary of the real  manifold $M$  is $2C$. 
\end{proof}

\end{document}